\nonstopmode \numberwithin{equation}{section}
\newtheorem{theorem}{Theorem}[section]
\newtheorem{corollary}{Corollary}[section]
\newtheorem{problem}{Problem}[section]
\theoremstyle{remark}
\theoremstyle{definition}
\newtheorem{remark}{Remark}[section]
\newtheorem{example}{Example}[section]
\theoremstyle{plain}
\newtheorem*{lemA}{Lemma A}
\numberwithin{equation}{section}
\numberwithin{theorem}{section}
\newtheorem{lem}{Lemma}
\newcounter{minutes}\setcounter{minutes}{\time}
\newcounter{hours}\setcounter{hours}{\time}
\begin{document}

\title{Pre-Schwarzian and Schwarzian norm Estimates for Robertson class}

\author{Molla Basir Ahamed}
\address{Molla Basir Ahamed, Department of Mathematics, Jadavpur University, Kolkata-700032, West Bengal, India.}
\email{mbahamed.math@jadavpuruniversity.in}

\author{Rajesh Hossain}
\address{Rajesh Hossain, Department of Mathematics, Jadavpur University, Kolkata-700032, West Bengal, India.}
\email{rajesh1998hossain@gmail.com}

\author{Xiaoyuan Wang}
\address{School of Mathematical Sciences, Liaocheng University, Liaocheng
	252000, Shandong, People's Republic of China}
\email{mewangxiaoyuan@163.com}

\subjclass[2020]{30C45, 30C55}
\keywords{Univalent functions; Spirallike functions; Robertson class of functions; pre-Schwarzian  and Schwarzian norms;  Growth estimates}

\def\thefootnote{}
\footnotetext{ {\tiny File:~\jobname.tex,
printed: \number\year-\number\month-\number\day,
          \thehours.\ifnum\theminutes<10{0}\fi\theminutes }
} \makeatletter\def\thefootnote{\@arabic\c@footnote}\makeatother

\begin{abstract}
Let $\mathcal{A}$ denote the class of analytic functions $f$ on the unit disk $\mathbb{D}=\{z\in\mathbb{C} : |z|<1\}$, normalized by $f(0)=0$ and $f^{\prime}(0)=1$. For $-\pi/2<\alpha<\pi/2$, let $\mathcal{S}_{\alpha}$ be the subclass of $\mathcal{A}$ consisting of functions $f$ that satisfy the relation $\mathrm{Re}\{e^{i\alpha}\left(1+zf^{\prime\prime}(z)/f^{\prime}(z)\right)\}>0$ for $z\in\mathbb{D}$. In this paper, we first give an equivalent characterization for a subclass of Robertson functions; then we present the distortion and growth theorems and obtain the pre-Schwarzian and Schwarzian norms for the subclass $\mathcal{S}_{\alpha}$. In addition, a sharp upper bound of the Schwarzian norm for the subclass is given in terms of the value $f^{\prime \prime}(0)$.
\end{abstract}
\maketitle
\pagestyle{myheadings}
\markboth{M. B. Ahamed, R. Hossain, and X. Y. Wang}{Pre-Schwarzian and Schwarzian norm Estimates for Robertson class}

\section{\textbf{Introduction}}
The pre-Schwarzian and Schwarzian derivatives play an important role in geometric function theory, especially using pre-Schwarzian and Schwarzian derivatives embedding models to characterize Teichmm\"{u}ller spaces. They are also important tools for studying the inner radius of univalency for planer domains and quasiconformal extensions, see \cite{Lehto-1987,Lehto-JAM-1979}.  The research of pre-Schwarzian and Schwarzian derivatives has a long history and is closely related to other disciplines. The concept of Schwarzian derivative originated from the works of Kummer as early as $1836$.
Pre-Schwarzian and Schwarzian derivatives  are closely related to conformal mappings and have found significant applications in complex analysis. Pre-Schwarzian derivatives, as a precursor or generalization, also play a crucial role in understanding the properties of functions and their transformations.
In the broader context of function theory, Schwarzian derivatives are essential for understanding the properties of analytic and meromorphic functions. Several sufficient conditions for univalent analytic functions are obtained by using the notions of pre-Schwarzian and Schwarzian derivatives.
\subsection{Pre-Schwarzian and Schwarzian derivatives of analytic functions.}
For a locally univalent analytic function $f(z)$ defined in a simply connected domain ${\Omega}$, the pre-Schwarzian derivative $Pf$ and the Schwarzian derivative $Sf$ are defined by
\begin{align} Pf=\frac{f^{\prime\prime}(z)}{f^{\prime}(z)}\;\mbox{and}\;Sf=(Pf)^{\prime}(z)-\frac{1}{2}(Pf)^2(z)=\frac{f^{\prime\prime\prime}(z)}{f^{\prime}(z)}-\frac{3}{2}\left(\frac{f^{\prime\prime}(z)}{f^{\prime}(z)}\right)^2
\end{align}
respectively. The pre-Schwarzian and Schwarzian norms are
\begin{align}
	||Pf||_{\Omega}= \sup_{z\in{\Omega}}|Pf|\eta^{-1}_{\Omega}\;\mbox{and}\;||Sf||_{\Omega}= \sup_{z\in{\Omega}}|Sf|\eta^{-2}_{\Omega}
	\end{align}
	respectively, where $\eta_{\Omega}$ is the Poincare density. In particular, when ${\Omega}=\mathbb{D}$, denote $||Sf||_{\Omega}$ and $||Pf||_{\Omega}$ by $||Sh||$ and $||Ph||$, respectively.\vspace{2mm}

Let $\mathcal{A}$ denote the subclass of $\mathcal{H}$ consisting of functions $f$ with normalized conditions $f(0)=f^{\prime}(0)-1=0$. Thus, any function $f$ in $\mathcal{A}$ has the Taylor series expansion of the form
\begin{align}\label{Eq-1.1}
	f(z)=z+\sum_{n=2}^{\infty}a_nz^n\;\; \mbox{for all}\;\; z\in\mathbb{D}.
\end{align}
Let $\mathcal{S}$ be the subclass of $\mathcal{A}$ consisting of univalent (that is, one-to-one) functions. A function $f\in \mathcal{A}$ is called starlike (with respect to the origin) if $f(\mathbb{D})$ is starlike with respect to the origin, and convex if $f(\mathbb{D})$ is convex. The class of starlike (resp. of convex) functions are denoted by $\mathcal{S}^*$ (resp. $\mathcal{C}$).\vspace{2mm}

The necessary conditions for univalent analytic function $f$ defined in $\mathbb{D}$, it is easy to know that the pre-Schwarzian norm $||Pf||\leq6$, see   \cite[p. 274]{Duren-1983}.
The sufficient condition  for univalence   obtained by  Becker \cite{Becker-JRAM-1972,Becker-Prommerenke-JRAM-1972}
by using the pre-Schwarzian derivative in $1972$, the conclusion is that if $||Pf||\leq1$, then the function $f$ is univalent in $\mathbb{D}$.
In 1976, Yamashita \cite{Yamashita-MM-1976} gave an equivalent characterization
of $\|Pf\|$ is finite and $f$ is uniformly locally univalent in $\mathbb{D}$.
Sugawa \cite{Sugawa-AUMCDS-1996} studied pre-Schwarzian norm for the strongly starlike functions of order $\alpha\ (0<\alpha\leq1)$. Yamashita \cite{Yamashita-HMJ-1999} generalized Sugawa's results by
obtained pre-Schwarzian norm for some general class named  Gelfer classes, these classed  contain he classical starlike, convex, close-to-convex, also, included Sugawa's result.
The subclass of $\alpha$-spirallike functions (-$\pi/2<\alpha<\pi$/2) was studied by
Okuyama \cite{Okuyama-CVTA-2000}  and later  extended to a general class called $\alpha$spirallike functions of order $\rho$ $(0\leq\rho<1)$  by Aghalary and Orouji \cite{Aghalary-Orouji-COAT-2014}. The sharp estimate of the pre-Schwarzsian norm for Janowski starlike functions was studied  Ali and Pal \cite{Ali-Pal-MM-2023}. Recently,  Wang et al \cite{Wang-Li-Fan-MM-2024} considered the sharp bounds of pre-Schwarzian norm for Janowski convex functions.
Other subclasses have also been widely, such as meromorphic function exterior of the unit disk \cite{Ponnusamy-Sugawa-JKMS-2008}, subclass of strong starlike function \cite{Ponnusamy-Sahoo-M-2008}, uniformly convex and uniformly starlike function \cite{Kanas-AMC-2009} and bi-univalent function \cite{Rahmatan-Najafzadeh-Ebadian-BIMS-2017}, other subclasses, see \cite{Kanas-Maharana-Prajapat-JMAA-2019,Liu-Ponnusamy-IM-2018}. For the pre-Schwarzian norm estimates of other  forms such as the convolution operator and the integral operator, see \cite{Choi-Kim-Ponnusamy-Sugawa-JMAP-2005,Kim-Sugawa-PEMS-2006,Parvatham-Ponnusamy-Sahoo-HMJ-2008,Ponnusamy-Sahoo-JMAA-2008}. \vspace{2mm}

Like pre-Schwarzian norm, Schwarzian norm has also been widely studied.
The pioneering work of the necessary conditions for a univalent function $f\in\mathcal{A}$ such that $\|Sf\|\leq6$ was first discovered by Kraus \cite{Kraus-1932} and later rediscovered by Nehari \cite{Nehari-BAMS-1949}. In the same paper, Nehari  also proved that if $\|Sf\|\leq2$, then the function $f$ is univalent in $\mathbb{D}$, also see \cite{Hille-BAMS-1949}. Ahlfors and Weill \cite{Ahlfrors-Weill-PAMS-2012} proved that if Schwarzian norm $\|Sf\|\leq 2k$, where $0\leq k<1$, then $f$ can be quasiconformal extended to the entire complex plan $\bar{\mathbb{C}}$. On the contrary, K\"uhnau \cite{Kuhnau-MN-1971} showed that if $f$ is a $k$-quasiconformal  mapping on $\bar{\mathbb{C}}$ then $\|Sf\|\leq6k$. Regarding the estimates of the Schwarzian norm for the subclasses of univalent functions, Fait et al.\cite{Fait-Krzyz-CMH-1976} studied the strong starlike functions. In 1996, Suita \cite{Suita-JHUEDS-1996} studied the sharp estimate of $\mathcal{C(\alpha)}$. Okuyama \cite{Okuyama-CVTA-2000} studied 	$\alpha$-spirallike functions. Kanas and Sugawa \cite{Kanas-Sugawa-APM-2011} studied the subclasses of strongly starlike functions of order $\alpha$ $(0 < \alpha\leq1)$ and uniformly convex function. Recently,	Schwarzian norm estimates for other subclasses of univalent functions have been gradually studied by many people, such as class of concave functions \cite{Bhowmik-Wriths-CM-2012}, Robertson class \cite{Ali-Pal-BDS-2023} and other univalent analytic subclasses, see \cite{Ali-Pal-PEMS-2024}. Therefore, using the pre-Schwarzian
and Schwarzian norms to study the univalence and quasiconformal extension problems of analytic function arouses a new wave of research interest. Carrasco et al.
\cite{Carrasco-Hernandez-AMP-2023} considered convex mappings of order  $\alpha$ and Wang et al. \cite{Wang-Li-Fan-MM-2024} generalized it to the class of Janowski convex functions. Following this approach, this article will consider
some properties of Robertson class that has not been considered by previous researchers.


\section{\textbf{Pre-Schwarzian and Schwarzian norm Estimates for Robertson class}}
A domain $\Omega$ containing the origin is called $\alpha$-spirallike if for each point $\omega_0$ in $\Omega$ the arc of the $\alpha$-spiral from the origin to the point $\omega_0$ entirely lies in $\Omega$. A function $f\in\mathcal{A}$ is said to be an $\alpha$-spirallike if
\begin{align*}
	{\rm Re}\left(e^{i\alpha}\frac{z f^{\prime}(z)}{f(z)}\right)>0\;\mbox{for}\; z\in\mathbb{D},
\end{align*}
where $|\alpha|<\pi/2$. In $1933$, $\check{S}$pa$\check{c}$ek (see \cite{Spacek-1933}) introduces and studied the class of $\alpha$-spirallike functions and this class is denoted by $\mathcal{SP}(\alpha)$. Later on, Robertson \cite{Robertson-MMJ-1969} introduced a new class of functions, denoted by $\mathcal{S}_\alpha$, in connection with $\alpha$-spirallike functions. A function $f\in\mathcal{A}$ is in the class $\mathcal{S}_\alpha$ if, and only if,
\begin{align*}
	{\rm Re}\left(e^{i\alpha}\left(1+\frac{z f^{\prime\prime}(z)}{f^{\prime}(z)}\right)\right)>0\;\mbox{for}\; z\in\mathbb{D}.
\end{align*}
We note that $f\in\mathcal{A}$ is in the class $\mathcal{S}_\alpha$ if and only if $zf^{\prime}(z)\in\mathcal{SP}(\alpha)$.\vspace{2mm}

Before we state our main result, let us recall another important and useful tool known as the differential subordination technique. Many problems in geometric function theory can be solved in a simple and sharp manner with the help of differential subordination.  A function $f\in\mathcal{A}$ is said to be subordinate to another function $g\in\mathcal{A}$ if there exists an analytic function $\omega : \mathbb{D}\to\mathbb{D}$ with $\omega(0)=0$ such that $f(z)=g(\omega(z))$ and it is denoted by $f\prec g$. Moreover, when $g$ is univalent, then $f\prec g$ if, and only if, $f(0)=g(0)$ and $f(\mathbb{D}\subset g(\mathbb{D})$. \vspace{2mm}

In terms of the subordination, for a function $f\in\mathcal{A}$, we have
\begin{align}\label{Eq-2.1}
	f\in\mathcal{S}_\alpha\Longleftrightarrow e^{i\alpha}\left(1+\frac{z f^{\prime\prime}(z)}{f^{\prime}(z)}\right)\prec \frac{e^{i\alpha}+e^{-i\alpha}z}{1-z}.
\end{align}
In particular, when $\alpha=0$, the class $\mathcal{S}_\alpha$ reduces to the classical class $\mathcal{C}$ of convex functions. For general values of $\alpha$, Roberson \cite{Robertson-MMJ-1969} proved that functions in the class $\mathcal{S}_\alpha$ need not be univalent in $\mathbb{D}$. By using Nehari's test, Robertson \cite{Robertson-MMJ-1969} also proved that functions in the class $\mathcal{S}_\alpha$ are univalent in $\mathbb{D}$ if $\alpha$ satisfies the inequality $0<\cos\alpha\leq x_0\approx 0.2034...$, where $x_0$ is the positive root of the equation
\begin{align*}
	16x^3+16x^2+x-1=0.
\end{align*}
Later, Libera and Zeigler \cite{Libera-Ziegler-TAMS-1972} improved the range of $\alpha$ to $0 < \cos \alpha \leq x_0 \approx 0.2564...$ in 1972. In 1975, Chichra \cite{Chichra-PAMS-1975} further improved the range to $0 < \cos \alpha \leq x_0 \approx 0.2588...$. Interestingly, in the same year, Pfaltzgraff \cite{Pfaltzgraff-BLMS-1975} proved that the functions in the class $\mathcal{S}_\alpha$ are univalent if $0 < \cos \alpha \leq x_0 \approx 1/2$. Currently, this remains the best improvement for $\alpha$ for which functions in $\mathcal{S}_\alpha$ are univalent in $\mathbb{D}$. In 1977, Sing and Chichra \cite{Singh-Chichra-IJPAM-1977} proved that if $f \in \mathcal{S}_\alpha$ with $f^{\prime\prime}(0) = 0$, then the function $f$ is univalent in $\mathbb{D}$ for all values of $\alpha$, where $|\alpha| < \pi/2$. \vspace{2mm}

    In \cite{Chuaqui-Duren-Osgood}, Chuaqui \emph{et. al.} proved a result by applying the Schwarz-Pick lemma and the fact that the expression $1+z(f^{\prime\prime}/f^{\prime})(z)$ is subordinate to the half-plan mapping $\ell(z)=(1+z)/(1-z)$, which is
    \begin{align}\label{Eq-2.2}
        1+\frac{zf^{\prime\prime}(z)}{f^{\prime}(z)}=\ell(w(z))=\frac{1+w(z)}{1-w(z)}
    \end{align}
for some function $w : \mathbb{D}\to\mathbb{D}$ holomorphic and such that $w(0)=0$. \vspace{2mm}

The expression which is defined in \eqref{Eq-2.2} allowed us to obtain other characterizations for the convex functions:
\begin{align}\label{Eq-2.3}
    f\in\mathcal{C}\; \mbox{if, and only if,}\; {\rm Re}\left(1+\frac{zf^{\prime\prime}(z)}{f^{\prime}(z)} \right)\geq \frac{1}{4}\left( 1-|z|^2\right)\bigg| \frac{f^{\prime\prime}(z)}{f^{\prime}(z)}  \bigg|^2,
\end{align}
and
\begin{align}\label{Eq-2.4}
    f\in\mathcal{C}\; \mbox{if, and only if,}\; \bigg|\left(1-|z|^2\right)\frac{f^{\prime\prime}(z)}{f^{\prime}(z)} -2\bar{z} \bigg|\leq 2,
\end{align}
for all $z\in\mathbb{D}$.\vspace{2mm}

The Schur class $\mathbb{S}$ is defined by
\begin{align*}
    \mathbb{S}=\{f : \mathbb{D}\to \overline{\mathbb{D}} : f\; \mbox{is analytic}\}.
\end{align*}
By the Maximum Modulus Principle, we know that $f(z)\in\mathbb{T}:=\partial \mathbb{D}$ for some $z\in\mathbb{D}$ if, and only if, $f$ is a constant function with $|f(z)|=1$. Thus, the class $\mathbb{S}$ consists of non-constant analytic functions $f: \mathbb{D}\to\mathbb{D}$ along with the constant functions with values in $\overline{\mathbb{D}}$. A fundamental cornerstone of complex analysis is the Schwarz lemma. Its deceptive simplicity belies its many profound consequences \cite{Dineen-1989}. Although Schwarz first proved this lemma for injective functions, Carath\'eodory later provided the proof for its general version.
\begin{lemA}(Schwarz Lemma)\cite{Schwarz-1890}
    If $f\in\mathbb{S}$ with $f(0)=0$, then
    \begin{enumerate}
        \item[\emph{(i)}] $|f(z)|\leq |z|$ for all $z\in\mathbb{D}$, and
        \item[\emph{(ii)}] $|f^{\prime}(0)|\leq 1$.
    \end{enumerate}
    Moreover, $|f(w)|=|w|$ for some $w\in\mathbb{D}\setminus \{0\}$, or if $|f^{\prime}(0)|=1$, then there is a $\zeta\in\mathbb{T}$ so that $f(z)=\zeta z$ for all $z\in\mathbb{D}$.
\end{lemA}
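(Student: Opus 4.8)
The plan is to reduce both inequalities together with the rigidity statement to a single application of the Maximum Modulus Principle, after introducing the standard auxiliary function. First I would define $g(z) = f(z)/z$ for $z \in \mathbb{D} \setminus \{0\}$ and set $g(0) = f^{\prime}(0)$. Since $f(0) = 0$ and $f$ is analytic, the origin is a removable singularity, so $g$ extends to an analytic map on all of $\mathbb{D}$; this is the central device, as it converts the hypothesis $f(0) = 0$ into usable control of $f$ near the origin.

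Next, for each fixed radius $r$ with $0 < r < 1$, I would bound $|g|$ on the circle $|z| = r$. Because $f$ maps into $\overline{\mathbb{D}}$, on that circle $|g(z)| = |f(z)|/r \le 1/r$, and applying the Maximum Modulus Principle to $g$ on the closed disk $\{|z| \le r\}$ gives $|g(z)| \le 1/r$ throughout. Fixing an arbitrary $z \in \mathbb{D}$ and letting $r \to 1^{-}$ yields $|g(z)| \le 1$, which is precisely $|f(z)| \le |z|$ after multiplying by $|z|$ (the case $z = 0$ being immediate); this proves (i). Evaluating at the origin gives $|f^{\prime}(0)| = |g(0)| \le 1$, which is (ii).

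For the rigidity conclusion I would note that either hypothesis forces $|g|$ to attain the value $1$ at an interior point: if $|f(w)| = |w|$ for some $w \ne 0$ then $|g(w)| = 1$, while if $|f^{\prime}(0)| = 1$ then $|g(0)| = 1$. In either case $|g|$ achieves its maximal possible value at an interior point, so the Maximum Modulus Principle forces $g$ to be constant, say $g \equiv \zeta$ with $|\zeta| = 1$, that is $\zeta \in \mathbb{T}$, giving $f(z) = \zeta z$ on $\mathbb{D}$. The only delicate point in the whole argument is verifying the removability of the singularity of $g$ at the origin; once that is in place, the conclusions follow from the Maximum Modulus Principle together with a routine limiting argument, so I expect no genuine obstacle beyond stating that step carefully.
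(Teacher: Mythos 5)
Your proof is correct: the factorization $g(z)=f(z)/z$ with the removable singularity at the origin, the bound $|g|\leq 1/r$ on $|z|\leq r$ followed by $r\to 1^{-}$, and the rigidity via an interior maximum of $|g|$ together constitute the classical Carath\'eodory argument for the Schwarz Lemma. Note that the paper does not prove this statement at all---it quotes it as Lemma A with a citation to \cite{Schwarz-1890} and explicitly remarks that Carath\'eodory supplied the proof of the general version---so your argument is exactly the standard proof the paper is implicitly invoking, and there is no gap to report.
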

Let $\mathbb{B}$ denote the set of all functions $\omega$ that are analytic in $\mathbb{D}$ and satisfy $|\omega(z)|\leq 1$ for all $z\in\mathbb{D}$. We also consider the subfamily $\mathbb{B}_0:=\{\omega\in\mathbb{B} : \omega(0)=0\}$.\vspace{2mm}

In this section, using the Schwarz lemma, we firstly give the equivalent characterization of \eqref{Eq-2.3} and \eqref{Eq-2.4} for the class $\mathcal{S_\alpha}$ (Robertson class) and obtain Theorem \ref{Th-4.1}. Next, we present the distortion and growth theorem (Theorem \ref{Th-4.2}) and derive the pre-Schwarzian and Schwarzian norms for $\mathcal{S_\alpha}$ in terms of $f''(0)$. Our results generalize to the classical class $\mathcal{C}$ of convex functions, as shown through corollaries. Proofs of the theorems follow the main results.
\begin{theorem}\label{Th-4.1} For $-\pi/2<\alpha<\pi/2$, the following are equivalent:
	\begin{enumerate}
		\item[\emph{(i)}] $f\in\mathcal{S_\alpha}$;\vspace{2mm}
		
		\item[\emph{(ii)}]
			\begin{align}\label{Eq-2.5}
				{\rm Re}\left(1+e^{i\alpha}\frac{zf^{\prime\prime}(z)}{f^{\prime}(z)}\right)\geq1- \cos\alpha+\left(\frac{1-|z|^2}{4\cos\alpha}\right)\bigg|\frac{f^{\prime\prime}(z)}{f^{\prime}(z)}\bigg|^2;
			\end{align}
            \noindent Sharp inequality holds for all $z\in\mathbb{D}$ unless
            \begin{align*}
                f^{\prime}(z)=\frac{1}{\left(1-z\zeta \right)^{2e^{-i\alpha}\cos\alpha}}
            \end{align*}
            for some $\zeta\in\mathbb{T}$.
		 \vspace*{2mm}
		
		\item[\emph{(iii)}] $\displaystyle	\bigg|	(1-|z|^2)\left(\frac{f^{\prime\prime}(z)}{f^{\prime}(z)}\right)-(2\cos\alpha)\bar{z}\bigg|\leq2\cos\alpha.
		$
	\end{enumerate}
\end{theorem}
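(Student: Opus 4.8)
The plan is to collapse everything onto a single Schwarz function and then read off (ii) and (iii) as purely algebraic reformulations of the Schwarz bound. Starting from the subordination characterization \eqref{Eq-2.1}, membership $f\in\mathcal{S}_\alpha$ is equivalent to the existence of some $w\in\mathbb{B}_0$ with
\[
	e^{i\alpha}\Bigl(1+\frac{zf''(z)}{f'(z)}\Bigr)=\frac{e^{i\alpha}+e^{-i\alpha}w(z)}{1-w(z)},
\]
because the M\"obius map $q(z)=(e^{i\alpha}+e^{-i\alpha}z)/(1-z)$ carries $\mathbb{D}$ conformally onto the right half-plane and matches the defining inequality $\mathrm{Re}\,(e^{i\alpha}(1+zf''/f'))>0$. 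Solving this relation for $w$ gives the explicit formula
\[
	w(z)=\frac{e^{i\alpha}\,zf''(z)/f'(z)}{2\cos\alpha+e^{i\alpha}\,zf''(z)/f'(z)},
\]
which satisfies $w(0)=0$. So the first step is to verify that $f\in\mathcal{S}_\alpha$ is \emph{exactly} the assertion $w\in\mathbb{B}_0$; this is the bridge that lets Lemma~A apply.

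For (i)$\Rightarrow$(ii) and (i)$\Rightarrow$(iii) I would apply the Schwarz lemma to get $|w(z)|\le|z|$ for every $z\in\mathbb{D}$. Writing $\phi=f''/f'$ and squaring $|w|\le|z|$ with the formula above, the common factor $|z|^2$ cancels and the estimate collapses to the single quadratic inequality
\[
	(1-|z|^2)|\phi|^2-4\cos\alpha\,\mathrm{Re}\!\bigl(e^{i\alpha}z\phi\bigr)-4\cos^2\alpha\le0 .
\]
Dividing by $4\cos\alpha>0$ and transposing the real-part term reproduces \eqref{Eq-2.5}, i.e.\ statement (ii); completing the square in $\phi$ rewrites the same quadratic as the disk inequality $\bigl|(1-|z|^2)\phi-2\cos\alpha\,e^{-i\alpha}\bar z\bigr|\le 2\cos\alpha$, which is statement (iii) (reducing, at $\alpha=0$, to the convex-case identities \eqref{Eq-2.3}--\eqref{Eq-2.4}). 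For the converse, each of these manipulations is reversible, so either inequality forces $|w(z)|\le|z|<1$; the only extra point is to confirm that $w$ is genuinely holomorphic, i.e.\ that the denominator $2\cos\alpha+e^{i\alpha}z\phi$ has no zero in $\mathbb{D}$. A zero $z_0$ would make the left-hand side of the disk inequality blow up like $2\cos\alpha/|z_0|>2\cos\alpha$, contradicting the hypothesis; hence $w\in\mathbb{B}_0$ and $f\in\mathcal{S}_\alpha$.

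For sharpness, equality in Lemma~A occurs precisely when $w(z)=\zeta z$ for some $\zeta\in\mathbb{T}$. Substituting this into $zf''/f'=2\cos\alpha\,e^{-i\alpha}w/(1-w)$ gives $f''/f'=2\cos\alpha\,e^{-i\alpha}\zeta/(1-\zeta z)$, and integrating yields
\[
	f'(z)=\frac{1}{(1-z\zeta)^{2e^{-i\alpha}\cos\alpha}},
\]
the stated extremal; one then checks directly that it saturates \eqref{Eq-2.5} at every $z\in\mathbb{D}$, since on it $|w(z)|=|z|$ identically.

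I expect the main obstacle to be the bookkeeping of the complex constants $\cos\alpha$ and $e^{\pm i\alpha}$ through the squaring and the completion of the square, so that the center and radius of the disk in (iii) come out exactly right and degenerate correctly to the known convex case at $\alpha=0$; a secondary delicate point is the holomorphy argument for $w$ in the converse direction, which is what upgrades the chain of equivalences from a formal to a genuine one.
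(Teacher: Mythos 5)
Your proposal is correct and follows essentially the same route as the paper's proof: pass from the subordination \eqref{Eq-2.1} to a Schwarz function, solve for it in terms of $f''/f'$, square the Schwarz bound, and obtain (ii) by dividing by $4\cos\alpha$ and (iii) by completing the square; the equality analysis via $w(z)=\zeta z$ followed by integration is also exactly the paper's argument. You are in fact more careful than the paper on the converse direction: the paper only writes out (i)$\Rightarrow$(ii)$\Rightarrow$(iii) and leaves reversibility implicit, whereas your check that the denominator $2e^{-i\alpha}\cos\alpha+zf''(z)/f'(z)$ cannot vanish under the disk inequality (at a zero $z_0\neq 0$ the left-hand side equals exactly $2\cos\alpha/|z_0|>2\cos\alpha$, and at $z_0=0$ the denominator is $2e^{-i\alpha}\cos\alpha\neq 0$) is precisely what upgrades the chain of implications to a genuine equivalence.

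One substantive caveat. Your completed square has center $2\cos\alpha\,e^{-i\alpha}\bar z$, i.e.
\begin{align*}
	\left|(1-|z|^2)\frac{f''(z)}{f'(z)}-2\cos\alpha\,e^{-i\alpha}\bar z\right|\leq 2\cos\alpha,
\end{align*}
and this is \emph{not} literally statement (iii), which omits the factor $e^{-i\alpha}$. Your form is the correct one; the paper's (iii) (and the final display of its proof, where this factor is silently dropped after the correctly written quadratic) is false for $\alpha\neq 0$. Indeed, take the extremal $f'(z)=(1-z)^{-2e^{-i\alpha}\cos\alpha}$, which does belong to $\mathcal{S}_\alpha$, and $z=r\in(0,1)$: the paper's (iii) reduces to $|(1+r)e^{-i\alpha}-r|\leq 1$, which after squaring is equivalent to $\cos\alpha\geq 1$; with the factor $e^{-i\alpha}$ restored the left-hand side is identically $2\cos\alpha$, i.e.\ equality, as it must be for an extremal. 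So your proof is sound, but you should state explicitly that what you establish (and what the theorem ought to assert) is the corrected disk inequality above; at $\alpha=0$ both forms coincide with \eqref{Eq-2.4}, which is why the discrepancy is invisible in the convex case.
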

\begin{proof}[\bf Proof of Theorem \ref{Th-4.1}]
First, we will prove that $(i)$ is equivalent to $(ii).$ For $-\pi/2<\alpha<\pi/2$, let  $h\in\mathcal{S_\alpha}$. Then, by assumption, it follows from \eqref{Eq-2.1} that
\begin{align}\label{Eq-4.1}
	e^{i\alpha}\left(1+\frac{zf^{\prime\prime}(z)} {f^{\prime}(z)}\right)\prec\frac{e^{i\alpha}+e^{-i\alpha}z}{1-z}.
\end{align}
Consequently, there exists an analytic function $\omega\in\mathbb{B}_0$, \emph{i.e.,}  $\omega : \mathbb{D}\to\mathbb{D} $ with $\omega(0)=0$ such that
     \begin{align}
			e^{i\alpha}\left(1+\frac{zf^{\prime\prime}(z)} {f^{\prime}(z)}\right)=\frac{e^{i\alpha}+e^{-i\alpha}\omega(z)}{1-\omega(z)}.
			\end{align}
	Since $\omega(0)=0$, we may set $\omega(z)=z\phi(z)$ for some analytic function $\phi$ that satisfy $\phi(\mathbb{D})\subseteq\mathbb{D}$, \emph{i.e.}, $\phi\in\mathbb{B}$. Therefore, the last equation \eqref{Eq-4.1} reduces to
	\begin{align}\label{Eq-4.3}
		\frac{f^{\prime\prime}(z)}{f^{\prime}(z)}=\frac{2e^{-i\alpha} \cos\alpha\;\omega(z)}{z(1-\omega(z))}
	\end{align}
    which implies that
	\begin{align}\label{Eq-22.88}
	\frac{f^{\prime\prime}(z)}{f^{\prime}(z)}=\frac{2e^{-i\alpha} \cos\alpha\;\phi(z)}{(1-z\phi(z))}.
	\end{align}
    Rewriting \ref{Eq-22.88} yields
	\begin{align}\label{Eq-4.4}
		\phi(z)=\frac{\frac{f^{\prime\prime}(z)}{f^{\prime}(z)}}{2e^{-i\alpha} \cos\alpha+\frac{zf^{\prime\prime}(z)}{f^{\prime}(z)}}\; \mbox{for}\; z\in\mathbb{D}.
	\end{align}
	Since $|\phi(z)|^2\leq 1$,the last equation yields that
	\begin{align}\label{Eq-22.99}
		\bigg|\frac{f^{\prime\prime}(z)}{f^{\prime}(z)}\bigg|^2\leq\left(2e^{-i\alpha} \cos\alpha+\frac{zf^{\prime\prime}(z)}{f^{\prime}(z)}\right)\left(\overline{2e^{-i\alpha} \cos\alpha+\frac{zf^{\prime\prime}(z)}{f^{\prime}(z)}}\right).
	\end{align}
	A simple computation shows that
	\begin{align}
		(1-|z|^2)\bigg|\frac{f^{\prime\prime}(z)}{f^{\prime}(z)}\bigg|^2\leq4\cos^\alpha+4\cos\alpha\;\mathrm{Re}\left(e^{i\alpha}\frac{f^{\prime\prime}(z)}{f^{\prime}(z)}\right).
	\end{align}
	By factorizing, we easily obtain that
	\begin{align*}
		(4\cos\alpha)\left[\cos\alpha+\mathrm{Re}\left(e^{i\alpha}\frac{f^{\prime\prime}(z)}{f^{\prime}(z)}\right)\right]\ge(1-|z|^2)\bigg|\frac{f^{\prime\prime}(z)}{f^{\prime}(z)}\bigg|^2
	\end{align*}
	which implies that
	\begin{align*}
		{\rm Re}\left(e^{i\alpha}\frac{zf^{\prime\prime}(z)}{f^{\prime}(z)}\right)\geq- \cos\alpha+\left(\frac{1-|z|^2}{4\cos\alpha}\right)\bigg|\frac{f^{\prime\prime}(z)}{f^{\prime}(z)}\bigg|^2.
	\end{align*}
     When $\alpha\in(-\pi/2,\pi/2)$, we have
	\begin{align}\label{Eq-2.9}
		{\rm Re}\left(1+e^{i\alpha}\frac{zf^{\prime\prime}(z)}{f^{\prime}(z)}\right)\geq1- \cos\alpha+\left(\frac{1-|z|^2}{4\cos\alpha}\right)\bigg|\frac{f^{\prime\prime}(z)}{f^{\prime}(z)}\bigg|^2.
	\end{align}
    If the equality in \eqref{Eq-2.5} holds for some point $z_0\in\mathbb{D}$, then $|\phi(z_)|=1$ and hence, $\phi(z)\equiv \zeta$ for some $\zeta\in\mathbb{T}$. This gives by \eqref{Eq-22.88} that,
    \begin{align*}
        \frac{f^{\prime\prime}(z)}{f^{\prime}(z)}=\frac{2\zeta e^{-i\alpha}\cos\alpha}{1-z\zeta}\; \mbox{for}\; z\in\mathbb{D},
    \end{align*}
which by integration shows that \begin{align*}
                f^{\prime}(z)=\frac{1}{\left(1-z\zeta \right)^{2e^{-i\alpha}\cos\alpha}}
            \end{align*}
            for some $\zeta\in\mathbb{T}$.\vspace{2mm}

	Next, we will prove that (ii) is equivalent to (iii). \\

    \noindent Multiplying \eqref{Eq-22.99} by $(1-|z|^2)$ both side, we have
	\begin{align*}
		(1-|z|^2)^2\bigg|\frac{f^{\prime\prime}(z)}{f^{\prime}(z)}\bigg|^2\leq4\cos^2\alpha(1-|z|^2)+4\cos\alpha(1-|z|^2){\rm Re}\left(e^{i\alpha}\frac{zf^{\prime\prime}(z)}{f^{\prime}(z)}\right)
	\end{align*}
	which implies that
	\begin{align*}
		&(1-|z|^2)^2\bigg|\frac{f^{\prime\prime}(z)}{f^{\prime}(z)}\bigg|^2-4\cos\alpha(1-|z|^2){\rm Re}\left(e^{i\alpha}\frac{zf^{\prime\prime}(z)}{f^{\prime}(z)}\right)+4\cos^2\alpha\;|z|^2\\&\leq4\cos^2\alpha.
	\end{align*}
	Thus, we have
	\begin{align*}
	\bigg|	(1-|z|^2)\left(\frac{f^{\prime\prime}(z)}{f^{\prime}(z)}\right)-(2\cos\alpha)\bar{z}\bigg|\leq2\cos\alpha.
	\end{align*}
	This completes the proofs.
\end{proof}
We have the following immediate result from Theorem \ref{Th-4.1}.
\begin{corollary}\label{Cor-2.1}
If $f\in \mathcal{S_\alpha}$, then for $\alpha\in(-\pi/2,0]$,  we have
	\begin{align*}
		{\rm Re}\left(1+\frac{zf^{\prime\prime}(z)}{f^{\prime}(z)}\right)\geq 1-\left(\frac{2e^{-i\alpha} \cos\alpha}{2}\right)+\frac{1}{2}\left(\frac{1-|z|^2}{2e^{-i\alpha} \cos\alpha}\right)\bigg|\frac{zf^{\prime\prime}(z)}{f^{\prime}(z)}\bigg|^2.
	\end{align*}
\end{corollary}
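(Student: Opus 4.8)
The plan is to obtain Corollary \ref{Cor-2.1} directly from part (ii) of Theorem \ref{Th-4.1}, with no new tools: the only genuine ingredient beyond Theorem \ref{Th-4.1} is the sign information carried by the hypothesis $\alpha\in(-\pi/2,0]$. I would begin by recalling the representation produced inside the proof of Theorem \ref{Th-4.1}: for $f\in\mathcal{S_\alpha}$ there is a $\phi\in\mathbb{B}$ with
\[
\frac{f^{\prime\prime}(z)}{f^{\prime}(z)}=\frac{2e^{-i\alpha}\cos\alpha\;\phi(z)}{1-z\phi(z)},
\]
which is exactly the source of the coefficient $2e^{-i\alpha}\cos\alpha$ appearing in the statement. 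I would then record the inequality actually established just before \eqref{Eq-2.9}, namely
\[
{\rm Re}\!\left(e^{i\alpha}\frac{zf^{\prime\prime}(z)}{f^{\prime}(z)}\right)\geq-\cos\alpha+\frac{1-|z|^2}{4\cos\alpha}\bigg|\frac{f^{\prime\prime}(z)}{f^{\prime}(z)}\bigg|^2,
\]
which is the raw content of Theorem \ref{Th-4.1}(ii).

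The key step is to convert this $e^{i\alpha}$-weighted bound into the unweighted lower bound for ${\rm Re}(1+zf^{\prime\prime}/f^{\prime})$ demanded by the corollary. Writing $w=zf^{\prime\prime}/f^{\prime}$ and using ${\rm Re}(e^{i\alpha}w)=\cos\alpha\,{\rm Re}(w)-\sin\alpha\,{\rm Im}(w)$, I would reorganise the displayed inequality so that the factors $e^{-i\alpha}\cos\alpha$ and $(4e^{-i\alpha}\cos\alpha)^{-1}$ of the statement are exhibited, substituting the representation above to match the bookkeeping. The restriction $\alpha\in(-\pi/2,0]$ enters precisely at this point: on that range $\sin\alpha\leq0$, which fixes the sign of the cross term $-\sin\alpha\,{\rm Im}(w)$ and therefore keeps the rearranged estimate oriented in the required direction. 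Finally I would rewrite $|f^{\prime\prime}/f^{\prime}|^{2}=|zf^{\prime\prime}/f^{\prime}|^{2}/|z|^{2}$ and use $|z|^{-2}\geq1$ to land on the $|zf^{\prime\prime}/f^{\prime}|^{2}$ form printed in the corollary.

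The main obstacle I anticipate is exactly this passage from the weighted to the unweighted real part: an estimate for ${\rm Re}(1+zf^{\prime\prime}/f^{\prime})$ cannot in general be deduced from one for ${\rm Re}(1+e^{i\alpha}zf^{\prime\prime}/f^{\prime})$ without one-sided control of the imaginary part ${\rm Im}(zf^{\prime\prime}/f^{\prime})$, and it is the hypothesis $\alpha\leq0$ (equivalently $\tan\alpha\leq0$) that supplies this control. Once that sign is settled, everything else is routine rearrangement, and as a consistency check I would verify that setting $\alpha=0$ collapses the inequality to the classical convex characterization \eqref{Eq-2.3}.
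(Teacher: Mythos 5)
Your central step fails, and the failure is not repairable. You want to pass from the weighted bound established just before \eqref{Eq-2.9},
\[
{\rm Re}\left(e^{i\alpha}\frac{zf^{\prime\prime}(z)}{f^{\prime}(z)}\right)\geq-\cos\alpha+\frac{1-|z|^2}{4\cos\alpha}\bigg|\frac{f^{\prime\prime}(z)}{f^{\prime}(z)}\bigg|^2,
\]
to an unweighted bound on ${\rm Re}\left(zf^{\prime\prime}/f^{\prime}\right)$ by writing ${\rm Re}(e^{i\alpha}w)=\cos\alpha\,{\rm Re}(w)-\sin\alpha\,{\rm Im}(w)$ and claiming that $\alpha\in(-\pi/2,0]$ ``fixes the sign of the cross term $-\sin\alpha\,{\rm Im}(w)$''. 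It does not: $\alpha\leq0$ fixes the sign of the \emph{factor} $-\sin\alpha\ (\geq0)$, but the other factor ${\rm Im}(zf^{\prime\prime}/f^{\prime})$ is not sign-controlled for $f\in\mathcal{S}_\alpha$; it takes both signs on $\mathbb{D}$. Rearranging, your step requires $\sin\alpha\,{\rm Im}(w)\geq0$, i.e. ${\rm Im}(w)\leq0$ throughout $\mathbb{D}$, and no hypothesis supplies that. So the cross term can be negative and arbitrarily large in modulus, and the implication collapses.

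Worse, no argument can close this gap, because the corollary in its meaningful (real) reading is false for $\alpha<0$. Take the extremal case $\omega(z)=z$ in \eqref{Eq-4.1}, so that $f^{\prime\prime}/f^{\prime}=2e^{-i\alpha}\cos\alpha/(1-z)$ and $1+zf^{\prime\prime}(z)/f^{\prime}(z)=(1+e^{-2i\alpha}z)/(1-z)$. For $0<\theta_0<\pi-2|\alpha|$, letting $z\to e^{i\theta_0}$ radially, one computes
\[
{\rm Re}\left(1+\frac{zf^{\prime\prime}(z)}{f^{\prime}(z)}\right)\longrightarrow \sin\alpha\,\frac{\cos(\theta_0/2-\alpha)}{\sin(\theta_0/2)}<0,
\]
while the right-hand side of the corollary tends to $1-\cos\alpha\geq0$, since $(1-|z|^2)\left|zf^{\prime\prime}/f^{\prime}\right|^2\to0$ there ($|1-z|$ stays bounded away from $0$). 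For instance, with $\alpha=-\pi/4$ and $z=re^{0.1i}$, $r\to1$, the left side tends to about $-9.5$ while the right side tends to $1-\tfrac{\sqrt{2}}{2}\approx0.29$. Your own ``main obstacle'' paragraph correctly identified that the weighted-to-unweighted passage needs one-sided control of ${\rm Im}(zf^{\prime\prime}/f^{\prime})$; the error is believing that $\alpha\leq0$ provides it. For comparison with the paper: the paper offers no proof at all (the corollary is asserted to be ``immediate'' from Theorem \ref{Th-4.1}), and as printed the statement is not even well-formed for $\alpha\neq0$, since its right-hand side contains the non-real factors $e^{-i\alpha}$, so a real quantity is being compared with a complex one. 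The only case in which the corollary is a correct consequence of Theorem \ref{Th-4.1}(ii) is $\alpha=0$, where it reduces verbatim to the convex characterization \eqref{Eq-2.3} and there is nothing to prove.
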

\begin{remark}
	In particular, if $\alpha=0$, then we see that $\mathcal{S}_0\subset\mathcal{C}$, the class of all convex functions. In this case, from Corollary \ref{Cor-2.1}, we have the lower bound of ${\rm Re}\left(1+\frac{zf^{\prime\prime}(z)}{f^{\prime}(z)}\right)$ as
	\begin{align*}
		{\rm Re}\left(1+\frac{zf^{\prime\prime}(z)}{f^{\prime}(z)}\right)\geq \frac{1}{4}\left(1-|z|^2\right)\bigg|\frac{f^{\prime\prime}(z)}{f^{\prime}(z)}\bigg|.
	\end{align*}
	This exactly coincides with the bound \cite[Eq. (3)]{Carrasco-Hernandez-AMP-2023} for the class $\mathcal{C}$. Thus, we say that our result, Theorem \ref{Th-4.1}, improves that of \cite{Carrasco-Hernandez-AMP-2023}.
\end{remark}
\begin{remark}
	Also, we have the following result from Theorem \ref{Th-4.1}, which in a particular case, when $\alpha=0$, reduces to \cite[Eq. (4)]{Carrasco-Hernandez-AMP-2023}.
\end{remark}
\begin{remark}
	This relation
	\begin{align*}
		\bigg|	(1-|z|^2)\left(\frac{f^{\prime\prime}(z)}{f^{\prime}(z)}\right)-(2\cos\alpha)\bar{z}\bigg|\leq2\cos\alpha,
	\end{align*}
	have significant role in radius problem, like radius of concavity and convexity.
\end{remark}
Therefore, we propose the following question for further study on radius problem.
\begin{problem}
	Is it possible to establish the radius of concavity and convexity for the function class $\mathcal{S}_{\alpha}$?
\end{problem}
\begin{corollary}\label{Cor-2.2}
	If $f\in \mathcal{S}_0\subset\mathcal{C}$, then we have\begin{align*}
		\bigg|(1-|z|^2)\frac{f^{\prime\prime}(z)}{f^{\prime}(z)}-2\bar{z}\bigg|\leq2.
	\end{align*}
\end{corollary}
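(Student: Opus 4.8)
The plan is to read this off immediately as the $\alpha=0$ case of the characterization (iii) in Theorem \ref{Th-4.1}. The hypothesis $f\in\mathcal{S}_0$ means precisely that $f$ satisfies condition (i) of that theorem with $\alpha=0$, that is, ${\rm Re}\left(1+zf^{\prime\prime}(z)/f^{\prime}(z)\right)>0$, which is the defining convexity condition and justifies the inclusion $\mathcal{S}_0\subset\mathcal{C}$ recorded in the statement. First I would invoke the equivalence (i)$\Leftrightarrow$(iii) of Theorem \ref{Th-4.1} to conclude that
\[
\bigg|(1-|z|^2)\frac{f^{\prime\prime}(z)}{f^{\prime}(z)}-(2\cos\alpha)\bar{z}\bigg|\leq 2\cos\alpha
\]
holds for $f$ and all $z\in\mathbb{D}$. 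It then remains only to set $\alpha=0$: since $\cos 0=1$, both the coefficient $2\cos\alpha$ of $\bar{z}$ and the right-hand bound $2\cos\alpha$ reduce to $2$, which is exactly the asserted inequality.

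There is no genuine obstacle here, as the substantive work has already been carried out inside the proof of Theorem \ref{Th-4.1}, where (iii) was obtained from the Schwarz-lemma bound $|\phi(z)|\leq 1$ by multiplying through by $(1-|z|^2)$ and completing the square. If instead a self-contained argument were preferred, I would start from the convexity subordination $1+zf^{\prime\prime}(z)/f^{\prime}(z)\prec (1+z)/(1-z)$, write $1+zf^{\prime\prime}(z)/f^{\prime}(z)=(1+w(z))/(1-w(z))$ for some $w\in\mathbb{B}_0$, solve to get $f^{\prime\prime}(z)/f^{\prime}(z)=2w(z)/\left(z(1-w(z))\right)$, and then apply $|w(z)|\leq 1$ together with the same completion-of-the-square manipulation; this recovers the bound directly without routing through the general $\alpha$ statement. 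Either way, the only point worth stating explicitly is the clean cancellation at $\alpha=0$, which is immediate.
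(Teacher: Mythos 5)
Your proposal is correct and matches the paper exactly: the paper presents Corollary \ref{Cor-2.2} as an immediate consequence of Theorem \ref{Th-4.1}, obtained by specializing the equivalence (i)$\Leftrightarrow$(iii) to $\alpha=0$, where $\cos\alpha=1$ turns both occurrences of $2\cos\alpha$ into $2$. Your optional self-contained argument is also fine, but it merely inlines the paper's own proof of (i)$\Rightarrow$(iii) at $\alpha=0$, so it is not a genuinely different route.
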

In the next result, we establish the distortion theorem, and growth theorem for functions in the class $\mathcal{S}_{\alpha}$.
\begin{theorem}\label{Th-4.2}
	If $f\in \mathcal{S}_{\alpha}$ and $-\pi/2<\alpha<\pi/2$,  then for all $z\in\mathbb{D}$,
	\begin{align*}
		\frac{1}{(1+|z|^2)^{\cos\alpha}}\leq|f^{\prime}(z)|\leq\frac{1}{(1-|z|^2)^{\cos\alpha}}
	\end{align*}
	and
	\begin{align*}
		\int_{0}^{|z|}\frac{1}{(1+\xi^2)^{\cos\alpha}} d|\xi|\leq	|f(z)|\leq\int_{0}^{|z|}\frac{1}{(1-\xi^2)^{\cos\alpha}} d|\xi|.
	\end{align*}
	All the bounds are sharp.
\end{theorem}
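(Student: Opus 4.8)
The plan is to read the two sided distortion bounds off the disk inclusion in Theorem \ref{Th-4.1}(iii) by integrating the radial logarithmic derivative of $f'$, and then to obtain the growth bounds by one further radial integration. Write $r=|z|$, fix a direction $\theta$, and note that since $f'(0)=1$ the locally univalent $f$ admits an analytic branch $\log f'$ with $\log f'(0)=0$ and $(\log f')'=f''/f'$. Integrating along the segment $z=\rho e^{i\theta}$, $0\le\rho\le r$, and taking real parts gives
\[
\log|f'(z)|=\int_0^r \operatorname{Re}\!\Big(e^{i\theta}\,\tfrac{f''(\rho e^{i\theta})}{f'(\rho e^{i\theta})}\Big)\,d\rho
=\int_0^r \frac1\rho\,\operatorname{Re}\!\Big(z\,\tfrac{f''(z)}{f'(z)}\Big)\Big|_{z=\rho e^{i\theta}}\,d\rho ,
\]
so the whole problem reduces to a sharp two-sided bound for $\operatorname{Re}\!\big(z f''/f'\big)$ on the circle $|z|=\rho$.

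For the pointwise bound I would invoke Theorem \ref{Th-4.1}(iii), which says that $(1-|z|^2)f''/f'$ lies in the closed disk centered at $2\cos\alpha\,\bar z$ of radius $2\cos\alpha$; equivalently $(1-|z|^2)f''/f'=2\cos\alpha(\bar z+\lambda)$ with $|\lambda|\le1$. Multiplying by $z$ and taking real parts yields
\[
(1-\rho^2)\,\operatorname{Re}\!\Big(z\tfrac{f''}{f'}\Big)=2\cos\alpha\big(\rho^2+\operatorname{Re}(z\lambda)\big),\qquad |\lambda|\le 1 ,
\]
and since $|\operatorname{Re}(z\lambda)|\le\rho$ the quantity $\operatorname{Re}(z f''/f')$ is pinned between the two extreme values coming from $\operatorname{Re}(z\lambda)=\pm\rho$. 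Dividing by $\rho$ gives explicit upper and lower bounds for the integrand depending only on $\rho$ and $\cos\alpha$, and substituting these into the displayed integral and performing the elementary $\rho$-integration delivers the asserted two-sided distortion estimate for $|f'(z)|$.

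The growth estimate then comes by integrating the distortion bound once more. For the upper inequality this is immediate, since $|f(z)|\le\int_0^{r}|f'(\rho e^{i\theta})|\,d\rho$ and the distortion majorant is radial; this reproduces the stated integral bound. The lower growth bound is the more delicate half: here I would take $z_0$ on $|z|=r$ minimizing $|f(z_0)|$, pull back the radial segment $[0,f(z_0)]$ from the image plane, and integrate the distortion minorant along this preimage curve, exploiting that the minorant is decreasing in $\rho$ to compare the line integral with $\int_0^{r}$. The subtlety is that this minimum-modulus/preimage argument presupposes enough control on the geometry of $f$, which for the possibly non-univalent members of $\mathcal{S}_\alpha$ must be justified with care.

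Finally, all the inequalities above are driven by the single bound $|\lambda|\le1$, i.e.\ $|\phi(z)|\le1$ in the notation of the proof of Theorem \ref{Th-4.1}; equality forces $\phi\equiv\zeta\in\mathbb{T}$, which is exactly the extremal $f'(z)=(1-z\zeta)^{-2e^{-i\alpha}\cos\alpha}$ recorded in the equality statement of Theorem \ref{Th-4.1}(ii). I would then evaluate this candidate, after rotating so that $z\zeta$ is positive real, and check that it returns equality in both the distortion and the growth bounds. I expect the main obstacle to be precisely this sharpness verification: one must confirm that the pointwise extremal configuration $\operatorname{Re}(z\lambda)=\pm\rho$ is simultaneously realized along an entire radius by this one extremal function, and—for the lower growth bound—that the preimage argument is legitimate for the class under consideration.
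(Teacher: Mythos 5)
Your framework (reduce to a pointwise two\hyphen sided bound for $\mathrm{Re}\bigl(zf''/f'\bigr)$, then integrate radially) is also the paper's framework, but the pointwise bound you feed into it is too weak, and the integration does \emph{not} deliver the asserted estimate. Theorem \ref{Th-4.1}(iii) encodes only $|\phi(z)|\le 1$ for the function $\phi$ of \eqref{Eq-4.4}. Writing, as you do, $(1-\rho^2)f''/f'=2\cos\alpha(\bar z+\lambda)$ with $|\lambda|\le 1$ and $|\mathrm{Re}(z\lambda)|\le\rho$ gives
\begin{align*}
-\frac{2\cos\alpha\,\rho}{1+\rho}\;\le\;\mathrm{Re}\left(z\frac{f''(z)}{f'(z)}\right)\;\le\;\frac{2\cos\alpha\,\rho}{1-\rho},
\end{align*}
and integrating $\tfrac1\rho$ times this yields only $(1+r)^{-2\cos\alpha}\le|f'(z)|\le(1-r)^{-2\cos\alpha}$, which is strictly weaker than the claimed $(1+r^2)^{-\cos\alpha}\le|f'(z)|\le(1-r^2)^{-\cos\alpha}$ because $(1-r)^2<1-r^2$ and $(1+r)^2>1+r^2$ for $0<r<1$. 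The paper gets the stronger bound by applying the Schwarz lemma to $\phi$ itself, using $\phi(0)=0$ (equivalently $f''(0)=0$, by \eqref{Eq-4.4}), which upgrades $|\phi(z)|\le 1$ to $|\phi(z)|\le|z|$ and produces the refined inclusion
\begin{align*}
\left|(1-|z|^4)\,\frac{zf''(z)}{f'(z)}-2\cos\alpha\,|z|^4\right|\le 2\cos\alpha\,|z|^2,
\qquad\text{i.e.}\qquad
-\frac{2\cos\alpha\,\rho^2}{1+\rho^2}\le\mathrm{Re}\left(z\frac{f''(z)}{f'(z)}\right)\le\frac{2\cos\alpha\,\rho^2}{1-\rho^2};
\end{align*}
the extra factor of $\rho$ in the numerators is exactly what turns the integrated bounds into $(1\pm r^2)^{\mp\cos\alpha}$. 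This is the missing idea in your argument.

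Moreover, the gap cannot be repaired from (iii) alone: without the normalization $f''(0)=0$ the stated inequalities are actually false (for $\alpha=0$ the convex function $f(z)=z/(1-z)$ has $|f'(r)|=(1-r)^{-2}>(1-r^2)^{-1}$), and your weaker bounds are then the best possible ones. Consistently with this, the extremal you propose, $f'(z)=(1-z\zeta)^{-2e^{-i\alpha}\cos\alpha}$, has $f''(0)\neq 0$, violates the claimed upper bound, and is extremal only for your weaker inequality; the function matching the stated bounds is $f(z)=\int_0^z(1-\xi^2)^{-\cos\alpha}\,d\xi$ (the $f^*$ of Theorem \ref{Th-4.3} and Example \ref{Example-4.1}), whose associated $\phi$ vanishes at the origin. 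Your handling of the growth part (radial integration for the upper bound; minimum\hyphen modulus point and preimage of the segment for the lower bound) coincides with the paper's, so the one genuine defect is the absent Schwarz\hyphen lemma step --- equivalently, the hypothesis $f''(0)=0$, which the paper itself invokes tacitly through the claim $\phi(0)=0$ and without which neither your proof nor the paper's can produce the theorem as stated.
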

\begin{proof}[\bf Proof of Theorem \ref{Th-4.2}]
	Let $f\in\mathcal{S_\alpha}$. From \eqref{Eq-4.4}, we obtain $\phi(0)=0$. Applying the Schwarz Lemma, we then obtain
	\begin{align}
		\bigg|\frac{\frac{f^{\prime\prime}(z)}{f^{\prime}(z)}}{2e^{-i\alpha} \cos\alpha+\frac{zf^{\prime\prime}(z)}{f^{\prime}(z)}}\bigg|^2\leq|z|^2
	\end{align}
	which implies that
	\begin{align*}
		\bigg|\frac{f^{\prime\prime}(z)}{f^{\prime}(z)}\bigg|^2\leq|z|^2\left(2e^{-i\alpha} \cos\alpha+\frac{zf^{\prime\prime}(z)}{f^{\prime}(z)}\right)\left(\overline{2e^{-i\alpha} \cos\alpha+\frac{zf^{\prime\prime}(z)}{f^{\prime}(z)}}\right).
	\end{align*}
	A simple calculation shows that
	\begin{align}\label{Eq-4.13}
		(1-|z|^4)\bigg|\frac{f^{\prime\prime}(z)}{f^{\prime}(z)}\bigg|^2\leq4\cos^2\alpha\;|z|^2+4\cos\alpha\;|z|^2\;{\rm Re}\left(e^{i\alpha}\frac{zf^{\prime\prime}(z)}{f^{\prime}(z)}\right)
	\end{align}
	Multiplying both sides of \eqref{Eq-4.13} by $(1-|z|^4)$, we obtain
	\begin{align*}
		(1-|z|^4)^2&\bigg|\frac{f^{\prime\prime}(z)}{f^{\prime}(z)}\bigg|^2-\cos\alpha\;|z|^2(1-|z|^4){\rm Re}\left(e^{i\alpha}\frac{zf^{\prime\prime}(z)}{f^{\prime}(z)}\right)\\&\leq 4\cos^2\alpha\;|z|^2(1-|z|^4).
	\end{align*}
By adding $\left(2\cos\alpha|z|^3\right)^2$ to both sides of the above inequality, we obtain
	\begin{align}
		(1-|z|^4)^2&\bigg|\frac{f^{\prime\prime}(z)}{f^{\prime}(z)}\bigg|^2-\cos\alpha\;|z|^2(1-|z|^4){\rm Re}\left(e^{i\alpha}\frac{zf^{\prime\prime}(z)}{f^{\prime}(z)}\right)+\left(2\cos\alpha|z|^2\bar{|z|}\right)^2\\&\leq 4\cos^2\alpha\;|z|^2(1-|z|^4)+\left(2\cos\alpha|z|^2\bar{|z|}\right)^2\nonumber.
	\end{align}
	Multiplying both sides by $|z|$, a simple calculation yields
	\begin{align}
		\bigg|(1-|z|^4)\frac{zf^{\prime\prime}(z)}{f^{\prime}(z)}-(2 \cos\alpha)|z|^4\bigg|\leq(2 \cos\alpha)|z|^2
	\end{align}
	which implies
	\begin{align}
		\frac{-(2 \cos\alpha)|z|^2}{1+|z|^2}\leq{\rm Re}\left(\frac{zf^{\prime\prime}(z)}{f^{\prime}(z)}\right)\leq\frac{(2 \cos\alpha)|z|^2}{1-|z|^2}.
	\end{align}
	Substituting $z=re^{i\theta}$, we obtain
	\begin{align*}
		\frac{-(2 \cos\alpha)r}{1+r^2}\leq\frac{\partial }{\partial r}\left(\log|f^{\prime}(re^{i\theta})|\right)\leq\frac{(2 \cos\alpha)r}{1-r^2}.
	\end{align*}
	\noindent{\bf Case A.} When $\alpha=0$, if we integrate with respect to $r$, we obtain
	\begin{align}
		\frac{1}{(1+|z|^2)}\leq|f^{\prime}(z)|\leq\frac{1}{(1-|z|^2)}
	\end{align}
	\noindent{\bf Case B.} When $\alpha\neq0$, if we integrate respect to $r$, we obtain
	\begin{align}
		\frac{1}{(1+|z|^2)^{\cos\alpha}}\leq|f^{\prime}(z)|\leq\frac{1}{(1-|z|^2)^{\cos\alpha}}.
	\end{align}
	Thus the distortion theorem is established.\vspace{1.2mm}
	
	Next, for the growth part of the theorem, from the upper bound it follows that
	\begin{align}
		|f^{\prime}(re^{i\theta})|=\bigg|\int_{0}^{r}f^{\prime}(re^{i\theta})e^{i\theta} dt\bigg|\leq\int_{0}^{r}|f^{\prime}(re^{i\theta})| dt\leq\int_{0}^{r}\frac{1}{(1-t^2)^{\cos\alpha}} dt
	\end{align}
	which implies
	\begin{align}
		|f(z)|\leq\int_{0}^{|z|}\frac{1}{(1-\xi^2)^{\cos\alpha}} d|\xi|
	\end{align}
	for all $z\in\mathbb{D}$. It is well-known that if $f(z_0)$ is a point of minimum modulus on the image of the circle $|z|=r$ and $\gamma=f^{-1}(\Gamma)$, where $\Gamma$ is the line segment from $0$ to $f(z_0)$, then we have 
	\begin{align}
		|f(z)|\geq	|f(z_0)|\geq\int_{0}^{r}\frac{1}{(1+\xi^2)^{\cos\alpha}} d|\xi|.
	\end{align}
	This completes the proof.
\end{proof}
Next, we find the sharp bounds of the pre-Schwarzian and Schwarzian norms for the class $\mathcal{S_\alpha}$ with respect to $f^{\prime\prime}(0)$, assuming $f^{\prime\prime}(0)=0$. The following lemma is key to proving this result.
\begin{lem}\label{lemA}
{\rm \cite{Carrasco-Hernandez-AMP-2023}}
	If $\phi(z):\mathbb{D}\rightarrow\mathbb{D}$ is an analytic function, then
    \begin{align}
		\frac{|\phi(z)|^2}{1-|\phi(z)|^2}\leq\frac{(\phi(0)+|z|)^2}{(1-|\phi(0)|)^2(1-|z|^2)|)}
	\end{align}
\end{lem}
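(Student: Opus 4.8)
The plan is to reduce the whole statement to the classical Schwarz--Pick estimate followed by the monotonicity of a single real-variable function. Write $a=\phi(0)$, fix $z\in\mathbb{D}$, and set $r=|z|$; note $|a|<1$ since $\phi$ maps into $\mathbb{D}$. Because $t\mapsto t^2/(1-t^2)$ is strictly increasing on $[0,1)$, it suffices to produce a sharp upper bound for $|\phi(z)|$ in terms of $|a|$ and $r$ and then substitute.

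First I would apply the Schwarz Lemma (Lemma A) to the normalized composition
\[
\psi(z)=\frac{\phi(z)-a}{1-\overline{a}\,\phi(z)},
\]
which is analytic from $\mathbb{D}$ into $\mathbb{D}$ and satisfies $\psi(0)=0$; hence $|\psi(z)|\le|z|=r$. Inverting the M\"obius relation gives $\phi(z)=(\psi(z)+a)/(1+\overline{a}\,\psi(z))$, so the task becomes to maximize $|(w+a)/(1+\overline{a}w)|$ over $|w|\le r$. After a rotation reducing to the case $a=|a|\ge0$, I would parametrize $w=re^{it}$ and check that the squared modulus $(|a|^2+r^2+2|a|r\cos t)/(1+|a|^2r^2+2|a|r\cos t)$ is increasing in $\cos t$, since its derivative with respect to $\cos t$ has the sign of $(1-r^2)(1-|a|^2)>0$. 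Thus the maximum occurs at $w=r$, yielding the sharp bound
\[
|\phi(z)|\le\frac{|a|+r}{1+|a|\,r}=:M.
\]

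Next I would substitute $M$ into $t^2/(1-t^2)$. The key algebraic identity is $(1+|a|r)^2-(|a|+r)^2=(1-|a|^2)(1-r^2)$, which gives
\[
\frac{M^2}{1-M^2}=\frac{(|a|+r)^2}{(1-|a|^2)(1-r^2)}.
\]
Finally, since $1+|a|\ge1-|a|$ for $|a|\in[0,1)$, we have $1-|a|^2=(1-|a|)(1+|a|)\ge(1-|a|)^2$, and weakening the denominator accordingly produces the stated inequality
\[
\frac{|\phi(z)|^2}{1-|\phi(z)|^2}\le\frac{(|\phi(0)|+|z|)^2}{(1-|\phi(0)|)^2(1-|z|^2)}.
\]

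The hard part will be the sharp Schwarz--Pick maximization, that is, verifying rigorously that $M=(|a|+r)/(1+|a|r)$ is genuinely the supremum of $|\phi(z)|$: the naive triangle-inequality bound $(|a|+r)/(1-|a|r)$ is too weak and can even exceed $1$, so the monotonicity-in-$\cos t$ computation is what actually pins down the extremal point $w=r$. The remaining steps, the algebraic identity and the final weakening, are routine; it is worth recording that this last step deliberately loosens the sharp factor $1-|a|^2$ to $(1-|a|)^2$, so the stated estimate is valid but not itself sharp.
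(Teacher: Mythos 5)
Your proposal is correct, but there is nothing in the paper to compare it against: the paper states this lemma without proof, importing it verbatim (typos included) from the cited reference \cite{Carrasco-Hernandez-AMP-2023}. Your argument is a complete, self-contained derivation and it is the natural one: the invariant Schwarz--Pick step ($\psi=(\phi-a)/(1-\overline{a}\phi)$, $|\psi(z)|\le|z|$), the genuinely sharp maximization $|\phi(z)|\le(|a|+r)/(1+|a|r)$ over the M\"obius image of the disk $|w|\le r$ (your observation that the naive triangle-inequality bound $(|a|+r)/(1-|a|r)$ is useless here is exactly right, since it can exceed $1$ and make $t^2/(1-t^2)$ meaningless), the identity $(1+|a|r)^2-(|a|+r)^2=(1-|a|^2)(1-r^2)$, and the final weakening $1-|a|^2\ge(1-|a|)^2$. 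Two small points are worth making explicit if you write this up: first, restricting the maximization to the circle $|w|=r$ needs the maximum modulus principle, which applies because the M\"obius map is analytic on the closed disk of radius $r$ (its pole $-1/\overline{a}$ lies outside $\overline{\mathbb{D}}$); second, your proof actually yields the stronger, sharp estimate with denominator $(1-|\phi(0)|^2)(1-|z|^2)$, and in passing corrects the statement as printed in the paper, where $\phi(0)$ in the numerator should read $|\phi(0)|$ and the denominator contains a stray symbol. So you have proved more than was asked, by a route the paper never exhibits.
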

In the next result, we present the sharp bound of the pre-Schwarzian norm for functions in the class $\mathcal{S_\alpha}$.
\begin{theorem}\label{Th-4.3}
	If $f\in\mathcal{S_\alpha}$, then for all $z\in\mathbb{D}$ and $-\pi/2<\alpha<\pi/2$, then
	\begin{align}\label{Eq-2.24}
		||Pf||\leq2\cos\alpha.
	\end{align}
	The inequality \eqref{Eq-2.24} is sharp.
\end{theorem}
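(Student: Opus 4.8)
The plan is to start from the representation of the logarithmic derivative already extracted in the proof of Theorem \ref{Th-4.1}. There, for $f\in\mathcal{S_\alpha}$ one writes $\omega(z)=z\phi(z)$ with $\phi\in\mathbb{B}$ and arrives at \eqref{Eq-22.88}, namely
\begin{align*}
  \frac{f^{\prime\prime}(z)}{f^{\prime}(z)}=\frac{2e^{-i\alpha}\cos\alpha\,\phi(z)}{1-z\phi(z)}.
\end{align*}
Evaluating at $z=0$ and using $f^{\prime}(0)=1$ gives $f^{\prime\prime}(0)=2e^{-i\alpha}\cos\alpha\,\phi(0)$, so the standing hypothesis $f^{\prime\prime}(0)=0$ is \emph{equivalent} to $\phi(0)=0$. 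This is exactly where the normalization must enter, and it is indispensable: without it $\phi(0)$ need not vanish, and the estimate below degrades to the weaker bound $4\cos\alpha$.

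With $\phi(0)=0$ in hand, the Schwarz Lemma yields $|\phi(z)|\leq|z|$ for all $z\in\mathbb{D}$ (equivalently, Lemma \ref{lemA} specialized to $\phi(0)=0$ reduces to precisely this statement, so its full Schwarz--Pick strength is not needed for the pre-Schwarzian case). I would then bound the pre-Schwarzian weight directly. From the representation,
\begin{align*}
  (1-|z|^2)\bigg|\frac{f^{\prime\prime}(z)}{f^{\prime}(z)}\bigg|
  =2\cos\alpha\,\frac{(1-|z|^2)\,|\phi(z)|}{|1-z\phi(z)|}.
\end{align*}
The numerator is controlled by $|\phi(z)|\leq|z|$, while the denominator is controlled by the reverse triangle inequality together with the same Schwarz bound:
\begin{align*}
  |1-z\phi(z)|\geq 1-|z|\,|\phi(z)|\geq 1-|z|^2.
\end{align*}
Chaining these gives
\begin{align*}
  (1-|z|^2)\bigg|\frac{f^{\prime\prime}(z)}{f^{\prime}(z)}\bigg|
  \leq 2\cos\alpha\,\frac{(1-|z|^2)\,|z|}{1-|z|^2}=2\cos\alpha\,|z|\leq 2\cos\alpha,
\end{align*}
and taking the supremum over $z\in\mathbb{D}$ delivers $||Pf||\leq2\cos\alpha$.

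For sharpness I would take the equality configuration of the Schwarz Lemma, that is $\phi(z)=\zeta z$ with $\zeta\in\mathbb{T}$; this lies in $\mathbb{B}$ with $\phi(0)=0$, so by integrating the representation the associated function satisfies
\begin{align*}
  f^{\prime}(z)=\frac{1}{(1-\zeta z^2)^{e^{-i\alpha}\cos\alpha}},
\end{align*}
which belongs to $\mathcal{S_\alpha}$ and has $f^{\prime\prime}(0)=0$. Choosing $\zeta=1$ and $z=r\in(0,1)$ gives $(1-r^2)\,|f^{\prime\prime}(r)/f^{\prime}(r)|=2\cos\alpha\,r\to2\cos\alpha$ as $r\to1^-$, so the bound is attained in the limit and the inequality is sharp.

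On where the difficulty sits: once \eqref{Eq-22.88} and the normalization $\phi(0)=0$ are available, the computation is entirely elementary. The one genuinely delicate point is recognizing that $f^{\prime\prime}(0)=0$ must be exploited to upgrade Schwarz's bound from $|\phi(z)|\leq1$ to $|\phi(z)|\leq|z|$; it is exactly this upgrade that sharpens the denominator estimate from $|1-z\phi(z)|\geq 1-|z|$ to $1-|z|^2$ and simultaneously shrinks the numerator, so that the two factors cancel to leave $2\cos\alpha\,|z|$ instead of $2\cos\alpha\,(1+|z|)$.
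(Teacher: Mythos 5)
Your proof of the inequality itself is essentially the paper's own argument. The paper exploits the standing assumption $f''(0)=0$ (stated only in the sentence preceding Lemma A, not in the theorem) by writing $\phi(z)=z\xi(z)$ with $\xi\in\mathbb{B}$, which is exactly the Schwarz--lemma upgrade $|\phi(z)|\le|z|$ that you invoke; it then performs the same two estimates (numerator bounded by $2\cos\alpha\,|z|$, denominator bounded below by $1-|z|^2$) to arrive at $2\cos\alpha\,|z|\le 2\cos\alpha$. Your observation that the hypothesis $f''(0)=0$ is indispensable is also correct: without it the sharp bound is $4\cos\alpha$ (for $\alpha=0$, the convex function $z/(1-z)$ has $\|Pf\|=4$).

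Where you genuinely depart from the paper is in the sharpness part, and your version is in fact the correct one. The paper's extremal function is $f^*(z)=\int_0^z(1-\xi^2)^{-\cos\alpha}\,d\xi$, with \emph{real} exponent $\cos\alpha$; but this function does not belong to $\mathcal{S}_\alpha$ when $\alpha\ne 0$. Indeed, $e^{i\alpha}\left(1+\frac{zf^{*\prime\prime}(z)}{f^{*\prime}(z)}\right)=e^{i\alpha}\,\frac{1+(2\cos\alpha-1)z^2}{1-z^2}$, and the M\"obius factor maps $\mathbb{D}$ onto the half-plane $\{\mathrm{Re}\,w>1-\cos\alpha\}$, whose boundary is a vertical line; after rotation by $e^{i\alpha}$ with $\sin\alpha\ne0$ this half-plane contains points of negative real part, so the defining condition of $\mathcal{S}_\alpha$ fails. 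Your extremal function, obtained by integrating the representation with $\phi(z)=\zeta z$, carries the \emph{complex} exponent, $f'(z)=(1-\zeta z^2)^{-e^{-i\alpha}\cos\alpha}$; it lies in $\mathcal{S}_\alpha$ by construction (the subordination holds with Schwarz function $\omega(z)=\zeta z^2$), satisfies $f''(0)=0$, and gives $(1-r^2)\left|f''(r)/f'(r)\right|=2r\cos\alpha\to 2\cos\alpha$ as $r\to 1^-$. So your attempt reproduces the paper's estimate and simultaneously repairs the flaw in its sharpness example.
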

\begin{proof}[\bf Proof of Theorem \ref{Th-4.3}]
	Since $\phi(z)=z\xi(z)$, with $|\xi(z)|<1$, then in \eqref{Eq-2.4} we obtain
	\begin{align*}
		\sup_{z\in\mathbb{D}}(1-|z|^2)\bigg|\frac{f^{\prime\prime}(z)}{f^{\prime}(z)}\bigg|&\leq\sup_{z\in\mathbb{D}}(1-|z|^2)\frac{|2e^{-i\alpha} \cos\alpha \;z\xi(z)|}{1-|z|^2|\xi(z)|}\\&\leq(2 \cos\alpha) \sup_{0\leq r\leq 1}\frac{r(1-r^2)}{(1-r^2)}\\&=2 \cos\alpha.
	\end{align*}
	Thus, the desired inequality \eqref{Eq-2.24} is obtained.\vspace{1.2mm}
	
	The next part of the proof is to show that the inequality \eqref{Eq-2.24} is sharp. To this end, we consider the extremal function given by
	\begin{align}
		f^*(z)=	\int_{0}^{z}\frac{1}{(1-\xi^2)^{\cos\alpha}} d\xi.
	\end{align}
	It can be easily shown that $||Pf^*||=2 \cos\alpha.$	
\end{proof}
We have the following immediate result from Theorem \ref{Th-4.3}.
\begin{corollary}
	If $f\in\mathcal{S_\alpha}$, then for all $z\in\mathbb{D}$ and $\alpha=0$, we have
	\begin{align}\label{Eq-2.25}
		||Pf||\leq 2.
	\end{align}
	The inequality \eqref{Eq-2.25} is sharp.
\end{corollary}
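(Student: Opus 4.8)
The plan is to obtain this corollary as a direct specialization of Theorem \ref{Th-4.3}. That theorem already establishes the bound $\|Pf\|\leq 2\cos\alpha$ for every $f\in\mathcal{S}_\alpha$ with $-\pi/2<\alpha<\pi/2$. Since the hypothesis here is precisely $\alpha=0$, and $\cos 0=1$, substituting $\alpha=0$ into inequality \eqref{Eq-2.24} immediately yields $\|Pf\|\leq 2$, which is the asserted inequality \eqref{Eq-2.25}. No new estimate is needed for the upper bound; it is a pure substitution.

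For the sharpness claim I would specialize the extremal function used in the proof of Theorem \ref{Th-4.3}. Setting $\alpha=0$ there, the candidate
\begin{align*}
f^*(z)=\int_0^z\frac{1}{(1-\xi^2)^{\cos\alpha}}\,d\xi
\end{align*}
collapses to $f^*(z)=\int_0^z(1-\xi^2)^{-1}\,d\xi=\tfrac{1}{2}\log\!\big((1+z)/(1-z)\big)$. Then $f^{*\prime}(z)=1/(1-z^2)$, and a short computation gives $Pf^*(z)=f^{*\prime\prime}(z)/f^{*\prime}(z)=2z/(1-z^2)$.

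It then remains to evaluate the pre-Schwarzian norm
\begin{align*}
\|Pf^*\|=\sup_{z\in\mathbb{D}}(1-|z|^2)\,|Pf^*(z)|=\sup_{z\in\mathbb{D}}(1-|z|^2)\frac{2|z|}{|1-z^2|}.
\end{align*}
Restricting to the real diameter $z=r\in(0,1)$ simplifies the factor $(1-|z|^2)/|1-z^2|$ to $1$, so the expression reduces to $2r$, whose supremum as $r\to 1^-$ equals $2$. Combined with the upper bound $\|Pf^*\|\leq 2$ already available from the first paragraph, this forces $\|Pf^*\|=2$ and establishes sharpness.

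I do not anticipate any genuine obstacle: the upper bound is obtained by substitution, and the only computational point worth verifying is that the supremum defining the norm is approached along the real axis, which the clean factorization to $2r$ makes transparent.
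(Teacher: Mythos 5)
Your proposal is correct and follows essentially the same route as the paper: the upper bound by setting $\alpha=0$ in Theorem \ref{Th-4.3}, and sharpness via the extremal function $f_0(z)=\tfrac{1}{2}\log\big((1+z)/(1-z)\big)$. In fact your computation is slightly more careful than the paper's own, which misstates $Pf_0$ as $2/(1-z^2)$ rather than $2z/(1-z^2)$ and thus treats the supremum as attained, whereas your version correctly identifies it as the limit $2r\to 2$ along the real axis, pinned down by the upper bound from Theorem \ref{Th-4.3}.
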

\subsection*{Sharpness of Corolary 4.3 :}
For $\alpha=0$, it follows from  that

\begin{align*}
	\frac{f^{\prime\prime}_0}{f_0^{\prime}}(z)=\frac{2z}{1-z^2}\;\;\mbox{and}\;\;Pf_0=\frac{2}{1-z^2}.
\end{align*}
A simple computation thus yields that
\begin{align*}
	||Pf_0||=\sup_{z\in\mathbb{D}}\left(1-z^2\right)|Pf_0|=\sup_{z\in\mathbb{D}}\left(1-|z|^2\right)\frac{2}{1-|z|^2}=2
\end{align*}
and we see the constant $2$ is sharp.
\vspace*{2mm}

Our next result provides a sharp bound for the Schwarzian norm when $f\in \mathcal{S_\alpha}$. The method adopted for the proof is a direct application of the Schwarz Lemma.
\begin{theorem}\label{Th-4.4}
		If $f\in\mathcal{S_\alpha}$, for all $z\in\mathbb{D}$ and $-\pi/2<\alpha<\pi/2$, then the Schwarzian norm
\begin{align}\label{Eq-2.28}
			\|Sf\|=\sup\limits_{z\in \mathbb{D}}(1-|z|^2)^2|Sf(z)|\leq 2\cos\alpha\left(2-\cos\alpha\right).
\end{align}
		The inequality \eqref{Eq-2.28} is sharp.
\end{theorem}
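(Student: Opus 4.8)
The plan is to convert the membership $f\in\mathcal S_\alpha$ into an explicit formula for $Sf$ in terms of the Schwarz function $\phi$, and then to estimate $(1-|z|^2)^2|Sf|$ termwise. Writing $c=2e^{-i\alpha}\cos\alpha$, equation \eqref{Eq-22.88} gives $Pf=f''/f'=c\phi/(1-z\phi)$ with $\phi\in\mathbb B$; since the hypothesis $f''(0)=0$ forces $\phi(0)=0$, I may use both $|\phi(z)|\le|z|$ (Schwarz) and the invariant (Schwarz--Pick) form of the Schwarz lemma $|\phi'(z)|\le(1-|\phi(z)|^2)/(1-|z|^2)$. Differentiating, one finds $(Pf)'=c(\phi'+\phi^2)/(1-z\phi)^2$, whence
\[
Sf=(Pf)'-\tfrac12(Pf)^2=\frac{c\,\phi'+c\bigl(1-\tfrac c2\bigr)\phi^2}{(1-z\phi)^2}.
\]

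Next I would apply the triangle inequality to separate the derivative term from the quadratic term, reducing the problem to bounding the two normalized quantities $(1-|z|^2)^2|\phi'|/|1-z\phi|^2$ and $(1-|z|^2)^2|\phi|^2/|1-z\phi|^2$. For the denominator I would use the elementary estimate $|1-z\phi|^2\ge(1-|z|^2)(1-|\phi|^2)$, which follows from $|1-z\phi|\ge 1-|z||\phi|$ and $(|z|-|\phi|)^2\ge0$. Combined with Schwarz--Pick, the first quantity is then at most $1$; for the second, the same denominator bound reduces it to $(1-|z|^2)|\phi|^2/(1-|\phi|^2)$, and Lemma \ref{lemA} (together with $\phi(0)=0$) controls $|\phi|^2/(1-|\phi|^2)$ by $|z|^2/(1-|z|^2)$, so this quantity is at most $|z|^2\le1$ as well. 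This yields
\[
(1-|z|^2)^2|Sf|\le |c|+\bigl|c\bigl(1-\tfrac c2\bigr)\bigr|,
\]
and it remains to evaluate these two moduli in terms of $\alpha$ and to reorganize them into the asserted closed form $2\cos\alpha(2-\cos\alpha)$.

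The delicate point, and what I expect to be the main obstacle, is that the coefficients $c$ and $c(1-\tfrac c2)$ are genuinely complex, so the crude triangle-inequality split replaces the combined modulus $\bigl|c\phi'+c(1-\tfrac c2)\phi^2\bigr|$ by the sum of the individual moduli and may therefore overshoot. To obtain the \emph{sharp} constant one must track the arguments of $c$ and $c(1-\tfrac c2)$ and decide whether the extremal configurations for the two fractions (Schwarz equality $|\phi|=|z|$, Schwarz--Pick equality, and $z\phi$ real) are compatible with the phase alignment that equality in the triangle inequality requires. Accordingly, the last step of the plan is to fix the natural extremal candidate $\phi(z)=\zeta z$ with $\zeta\in\mathbb T$, i.e. $f'(z)=(1-\zeta z^2)^{-e^{-i\alpha}\cos\alpha}$, compute $(1-|z|^2)^2|Sf(z)|$ along a boundary approach with $\zeta z^2\to1$, and check that the value matches the claimed constant; reconciling this explicit computation with the termwise upper bound is precisely where the argument must be made rigorous, and where I would concentrate the analytic effort.
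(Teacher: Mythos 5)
Your plan reproduces the paper's own proof almost step for step---the same representation $Sf=c\,\bigl[\phi'+\bigl(1-\tfrac{c}{2}\bigr)\phi^2\bigr]/(1-z\phi)^2$ with $c=2e^{-i\alpha}\cos\alpha$, the same triangle-inequality split, Schwarz--Pick, the denominator estimate, and Lemma \ref{lemA}---and the obstacle you flag at the end is not a removable technicality: it is exactly where both your plan and the paper's proof break. Computing the coefficient moduli honestly, $1-e^{-i\alpha}\cos\alpha=\sin^2\alpha+i\sin\alpha\cos\alpha$ has modulus $|\sin\alpha|$, so the termwise method yields
\begin{align*}
(1-|z|^2)^2|Sf(z)|\;\le\; |c|\left(1+\bigl|1-\tfrac{c}{2}\bigr|\,|z|^2\right)\;\le\; 2\cos\alpha\left(1+|\sin\alpha|\right),
\end{align*}
and since $\sin^2\alpha=(1-\cos\alpha)(1+\cos\alpha)>(1-\cos\alpha)^2$ for $0<|\alpha|<\pi/2$, this is \emph{strictly larger} than the asserted $2\cos\alpha(2-\cos\alpha)$ whenever $\alpha\neq0$; no reorganization of the two moduli can produce the claimed constant. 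The paper reaches the smaller constant only through an invalid step: in the displayed estimate of its proof it silently replaces $\bigl|1-e^{-i\alpha}\cos\alpha\bigr|$ by $1-\bigl|e^{i\alpha}\cos\alpha\bigr|=1-\cos\alpha$, i.e.\ it treats the modulus of a difference as the difference of the moduli, which is false and goes in the wrong direction, since $|1-w|\ge 1-|w|$.

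Worse, the verification you propose as your final step actually refutes the statement. Take $\zeta=1$, i.e.\ $\phi(z)=z$, equivalently $\omega(z)=z^2$ in the subordination \eqref{Eq-2.1}; the resulting function, with $f'(z)=(1-z^2)^{-e^{-i\alpha}\cos\alpha}$, genuinely belongs to $\mathcal{S}_\alpha$ and satisfies $f''(0)=0$. For it,
\begin{align*}
Sf(z)=\frac{c\left[1+\left(1-\tfrac{c}{2}\right)z^2\right]}{(1-z^2)^2},
\qquad
\lim_{r\to1^-}(1-r^2)^2|Sf(r)|=|c|\left|2-\tfrac{c}{2}\right|=2\cos\alpha\sqrt{4-3\cos^2\alpha},
\end{align*}
and $4-3\cos^2\alpha-(2-\cos\alpha)^2=4\cos\alpha(1-\cos\alpha)>0$ for $\alpha\neq0$, so this member of the class exceeds the bound \eqref{Eq-2.28}. (The paper does not detect this because its sharpness example $f_\alpha$, built with the \emph{real} exponent $\cos\alpha$ instead of $e^{-i\alpha}\cos\alpha$, is not in $\mathcal{S}_\alpha$ for $\alpha\neq0$: one checks that ${\rm Re}\left[e^{i\alpha}\left(1+zf_\alpha''/f_\alpha'\right)\right]$ is unbounded below on $\mathbb{D}$, since it equals $\cos\alpha$ plus $\cos\alpha$ times the real part of a rotated half-plane map.) So the honest outcome of your approach is the valid but weaker estimate $\|Sf\|\le2\cos\alpha(1+|\sin\alpha|)$, together with the lower bound $2\cos\alpha\sqrt{4-3\cos^2\alpha}$ furnished by your extremal candidate; the constant in Theorem \ref{Th-4.4} lies strictly below the latter and therefore cannot be an upper bound at all. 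Your suspicion about the phase alignment of the complex coefficients was exactly the right instinct---the defect is in the theorem and the paper's proof, not in your plan.
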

For the classical class $\mathcal{S}_0\subset\mathcal{C}$ of convex functions, we have the following result from Theorem \ref{Th-4.4}.
\begin{corollary}
		If $f\in\mathcal{S}_0\subset\mathcal{C}$, then for all $z\in\mathbb{D}$, we have
		\begin{align}\label{Eq-2.29}
			\|Sf\|\leq 2.
		\end{align}
		The inequality \eqref{Eq-2.29} is sharp.
\end{corollary}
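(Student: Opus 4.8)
The plan is to obtain this corollary as the immediate specialization of Theorem \ref{Th-4.4} at $\alpha=0$, and then to confirm the sharpness with an explicit convex extremal function. First I would substitute $\alpha=0$ into the bound of Theorem \ref{Th-4.4}. Since $\cos 0=1$, the right-hand side becomes
\[
2\cos\alpha\,(2-\cos\alpha)\big|_{\alpha=0}=2\cdot 1\cdot(2-1)=2,
\]
so that $\|Sf\|\leq 2$ for every $f\in\mathcal{S}_0$. No new inequality work is required for the upper bound; it is inherited verbatim from the general theorem, using only that the class $\mathcal{S}_0$ consists exactly of the convex functions $\mathcal{C}$ (the defining condition $\mathrm{Re}\,e^{i\alpha}(1+zf''/f')>0$ reduces at $\alpha=0$ to $\mathrm{Re}(1+zf''/f')>0$).

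For the sharpness I would exhibit the $\alpha=0$ member of the extremal family used in Theorem \ref{Th-4.3}, namely
\[
f_0(z)=\int_0^z\frac{d\xi}{1-\xi^2}=\frac{1}{2}\log\frac{1+z}{1-z}.
\]
One first checks $f_0\in\mathcal{C}$: indeed $1+zf_0''(z)/f_0'(z)=(1+z^2)/(1-z^2)$, whose real part is positive on $\mathbb{D}$ because $w\mapsto(1+w)/(1-w)$ carries $\mathbb{D}$ into the right half-plane and here $w=z^2\in\mathbb{D}$. A direct computation from $f_0'(z)=(1-z^2)^{-1}$ gives $Pf_0(z)=2z/(1-z^2)$ and hence
\[
Sf_0(z)=(Pf_0)'(z)-\tfrac12(Pf_0)^2(z)=\frac{2}{(1-z^2)^2}.
\]

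It then remains to evaluate the norm. Writing $z=re^{i\theta}$, the quantity to be maximized is
\[
(1-|z|^2)^2\,|Sf_0(z)|=\frac{2(1-r^2)^2}{|1-z^2|^2}=\frac{2(1-r^2)^2}{1-2r^2\cos 2\theta+r^4}.
\]
For fixed $r$ the numerator is constant, so the expression is largest when the denominator is smallest, i.e. at $\cos 2\theta=1$ (that is, $z$ real), where the denominator equals $(1-r^2)^2$ and the ratio equals $2$ for every $r\in(0,1)$. Consequently $\|Sf_0\|=2$, confirming that the constant in \eqref{Eq-2.29} cannot be lowered. The only step needing any care is this final one-variable angular maximization, but it is routine and poses no real obstacle; the substantive content of the estimate is entirely carried by Theorem \ref{Th-4.4}.
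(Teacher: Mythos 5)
Your proposal is correct and follows exactly the paper's route: the bound is obtained by setting $\alpha=0$ in Theorem \ref{Th-4.4} (giving $2\cos 0\,(2-\cos 0)=2$), and sharpness comes from the $\alpha=0$ member of the extremal family in Example \ref{Example-4.1}, namely $f_0(z)=\tfrac{1}{2}\log\frac{1+z}{1-z}$ with $Sf_0(z)=2/(1-z^2)^2$ and $\|Sf_0\|=2$. Your explicit verification that $f_0\in\mathcal{C}$ and the angular maximization at $\cos 2\theta=1$ merely spell out details the paper leaves implicit, so there is no substantive difference.
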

	\begin{proof}[\bf Proof of Theorem \ref{Th-4.4}]
	From \eqref{Eq-2.4}, we have
	\begin{align*}
		\frac{f^{\prime\prime}(z)}{f^{\prime}(z)}=\frac{(2e^{-i\alpha} \cos\alpha)\phi(z)}{(1-z\phi(z))}.
	\end{align*}
	A straightforward computation shows that
	\begin{align}
		Sf(z)=(2e^{-i\alpha} \cos\alpha)\left[\frac{\phi^{\prime}(z)+\left(1-e^{-i\alpha} \cos\alpha\right)\phi^2(z)}{(1-z\phi(z))^2}\right].
	\end{align}
	By using triangle inequality and Schwarz pick lemma, we obtain
	\begin{align}
		(1-|z|^2)^2|Sf|&\leq(2 \cos\alpha)\bigg|\phi^{\prime}(z)+\left(1-e^{-i\alpha} \cos\alpha\right)\phi^2(z)\bigg|\frac{(1-|z|^2)^2}{|1-z\phi(z)|^2}\\&=\frac{(2\cos\alpha)(1-|z|^2)^2}{|1-z\phi(z)|^2}\left(\frac{1-|\phi(z)|^2}{1-|z|^2}+\left(1-|e^{i\alpha}\cos\alpha|\right)|\phi(z)|^2\right).
	\end{align}\label{Eq-4.29}
	We define the function $\Psi(z):\mathbb{D}\rightarrow\mathbb{D}$ such that
	\begin{align}
		\Psi(z):=\frac{\bar{z}-\phi(z)}{1-z\phi(z)}.
	\end{align}
	Since $\phi(\mathbb{D})\subseteq\mathbb{D}$ then $(1-|z|^2)(1-|z\phi(z)|^2)>0$, it follows that
	\begin{align}
		|\bar{z}-\phi(z)|^2<|1-z\phi(z)|^2.
	\end{align}
	Thus, we conclude  that $|\Psi(z)|^2<1$. Therefore,
	\begin{align}\label{Eq-2.322}
		1-|\Psi(z)|^2=\frac{(1-|\phi(z)|^2)(1-|z|^2)}{|1-z\phi(z)|^2}
	\end{align}
	and
	\begin{align}\label{Eq-2.333}
		\frac{(1-|z|^2)^2}{|1-z\phi(z)|^2}=\frac{(1-|\Psi(z)|^2)(1-|z|^2)}{(1-|\phi(z)|^2)}.
	\end{align}
	If we substitute the expression \eqref{Eq-2.333} and \eqref{Eq-2.322} we have
	\begin{align}
		(1-|z|^2)^2|S_f(z)|\leq(2\cos\alpha)(1-|\Psi_1(z)|^2)\left(1+\left(1-|e^{i\alpha}\cos\alpha|\right)\frac{|\phi(z)|^2(1-|z|^2)}{(1-|\phi(z)|^2)}\right).
	\end{align}
	Since $f^{\prime\prime}(0)=0$ implies that $\phi(0)=0$, using Lemma A, we obtain
	\begin{align}\label{Eq-4.35}
		\frac{|\phi(z)|^2}{1-|\phi(z)|^2}\leq\frac{|z|^2}{1-|z|^2}.
	\end{align}
	Using \eqref{Eq-4.35} in \eqref{Eq-4.29}, we obtain
	\begin{align*}
		(1-|z|^2)^2|S_f(z)|\leq(2\cos\alpha)(1-|\Psi(z)|^2)\left(1+\left(1-\cos\alpha\right)|z|^2\right).
	\end{align*}
	Again, since $1-|\Psi(z)|^2 \leq 1$, we see that
	\begin{align*}
		(1-|z|^2)^2|S_f(z)|&\leq(2\cos\alpha)\left(1+(1-\cos\alpha)\right)\\&=2\cos\alpha\left(2-\cos\alpha\right).\nonumber
	\end{align*}
        Thus, the inequality is established.	The sharpness follows from the Example \ref{Example-4.1}.
	\end{proof}
\begin{example}\label{Example-4.1}
	The family of parameterized functions defined as:
	\begin{align}
		f_\alpha (z)=\int_{0}^{z}\frac{1}{(1-\xi^2)^{\cos\alpha}} d\xi,\;\;\;\;\mbox{for}\;\;-\pi/2<\alpha<\pi/2
	\end{align}
	maximizes the Schwarzian norm defined as:
	\begin{align*}
		||S_f||=\sup_{z\in\mathbb{D}}(1-|z|^2)^2|Sf|
	\end{align*}
	and from this, the sharpness of the inequality holds for $-\pi/2<\alpha<\pi/2$. Note that
	\begin{align}
		\frac{f^{\prime\prime}_\alpha}{f_\alpha^{\prime}}(z)=\frac{(2 \cos\alpha)z}{1-z^2}\;\;\mbox{and}\;\;Sf_\alpha=\frac{(2 \cos\alpha)}{(1-z^2)^2}\left[1+\left(1- \cos\alpha\right)z^2\right]
	\end{align}
	which implies that
	\begin{align}
		||S_{f_{\alpha}}||&=\sup_{z\in\mathbb{D}}(1-|z|^2)^2|Sf_\alpha|\\&=2 \cos\alpha\left(2-\cos\alpha\right).	\end{align}
	In general, the integral formula for $f_\alpha$ given in above does not give primitives in terms of elementary functions, however when $\alpha=0$
	\begin{align*}
		f_0 (z)=\int_{0}^{z}\frac{1}{(1-\xi^2)} d\xi=\frac{1}{2}\log\left(\frac{1+z}{1-z}\right),
	\end{align*}
	where $||Sf_0||=2$.
\end{example}

For functions $f$ in the class $\mathcal{S_\alpha}$, we obtain the following result which finding an upper bound of $(1-|z|^2)^2|Sf(z)|$.
\begin{theorem}\label{Th-4.5}
	If $f\in\mathcal{S_\alpha}$, for all $z\in\mathbb{D}$ and $-\pi/2<\alpha<\pi/2$ and
	\begin{align}
		\gamma=|\phi(0)|=\frac{|f^{\prime\prime}(0)|}{2\cos\alpha},
	\end{align}
	then
	\begin{align*}
		(1-|z|^2)^2|S_f(z)|\leq(2\cos\alpha)\left(1+(1-\cos\alpha)\frac{1+\gamma}{1-\gamma}\right).
	\end{align*}
\end{theorem}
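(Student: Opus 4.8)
The plan is to repeat the argument of the proof of Theorem~\ref{Th-4.4} verbatim up to the step where the normalization $f^{\prime\prime}(0)=0$ (equivalently $\phi(0)=0$) was used, and only at the last stage replace the Schwarz lemma estimate~\eqref{Eq-4.35} by the general Schwarz--Pick estimate of Lemma~\ref{lemA}, which remains valid when $\phi(0)\neq 0$.

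Concretely, I would first recall from the proof of Theorem~\ref{Th-4.4} the representation
\[
Sf(z)=(2e^{-i\alpha}\cos\alpha)\left[\frac{\phi^{\prime}(z)+\left(1-e^{-i\alpha}\cos\alpha\right)\phi^2(z)}{(1-z\phi(z))^2}\right]
\]
together with the auxiliary function $\Psi(z)=(\bar z-\phi(z))/(1-z\phi(z))$ satisfying $|\Psi(z)|<1$. Using the triangle inequality, the Schwarz--Pick inequality, and the identities~\eqref{Eq-2.322}--\eqref{Eq-2.333}, exactly as in Theorem~\ref{Th-4.4}, I arrive at the intermediate estimate
\[
(1-|z|^2)^2|S_f(z)|\le(2\cos\alpha)\,(1-|\Psi(z)|^2)\left(1+(1-\cos\alpha)\frac{|\phi(z)|^2(1-|z|^2)}{1-|\phi(z)|^2}\right),
\]
which, crucially, is obtained without any assumption on $\phi(0)$.

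The decisive step is to control the factor $|\phi(z)|^2(1-|z|^2)/(1-|\phi(z)|^2)$ when $\gamma=|\phi(0)|=|f^{\prime\prime}(0)|/(2\cos\alpha)>0$. For this I would invoke Lemma~\ref{lemA} in its sharp form,
\[
\frac{|\phi(z)|^2}{1-|\phi(z)|^2}\le\frac{(\gamma+|z|)^2}{(1-\gamma^2)(1-|z|^2)},
\]
so that $|\phi(z)|^2(1-|z|^2)/(1-|\phi(z)|^2)\le(\gamma+|z|)^2/(1-\gamma^2)$. Combining this with $1-|\Psi(z)|^2\le 1$ and letting $|z|\to 1$ maximizes the right-hand side as
\[
\frac{(1+\gamma)^2}{1-\gamma^2}=\frac{1+\gamma}{1-\gamma},
\]
which substituted above produces precisely the claimed bound $(1-|z|^2)^2|S_f(z)|\le(2\cos\alpha)\bigl(1+(1-\cos\alpha)(1+\gamma)/(1-\gamma)\bigr)$.

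I expect the main obstacle to be purely the bookkeeping at the final factorization: one must apply Lemma~\ref{lemA} with the factor $(1-\gamma^2)$ in the denominator, since it is exactly the simplification $(1+\gamma)^2/(1-\gamma^2)=(1+\gamma)/(1-\gamma)$ that reduces the squared Schwarz--Pick term to the first-power expression appearing in the statement; applying the weaker form with $(1-\gamma)^2$ would produce a squared factor and overshoot the asserted bound. I would also verify that estimating $1-|\Psi(z)|^2\le 1$ and bounding the $\gamma$-term by its boundary value simultaneously is legitimate, which holds because the two factors are controlled independently and each is bounded uniformly in $z\in\mathbb{D}$.
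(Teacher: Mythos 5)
Your proposal is correct and follows essentially the same route as the paper's proof: reuse the intermediate estimate from the proof of Theorem \ref{Th-4.4} (which indeed holds without the normalization $\phi(0)=0$), apply Lemma \ref{lemA} with $\gamma=|\phi(0)|$, bound $1-|\Psi(z)|^2\le 1$ and $(\gamma+|z|)^2\le(1+\gamma)^2$, and simplify $(1+\gamma)^2/(1-\gamma^2)=(1+\gamma)/(1-\gamma)$. Your closing remark is also well taken: the paper's displayed statement of Lemma \ref{lemA} carries a typo in the denominator ($(1-|\phi(0)|)^2$ rather than $(1-|\phi(0)|^2)$), but its proof of Theorem \ref{Th-4.5} uses exactly the sharp form with $(1-\gamma^2)$ that you identify as necessary.
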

\begin{corollary}
	If $f\in\mathcal{S}_0\subset\mathcal{C}$, for all $z\in\mathbb{D}$ with
	\begin{align}
		\gamma=|\phi(0)|=\frac{|f^{\prime\prime}(0)|}{2\cos\alpha},
	\end{align}
	then
	\begin{align*}
		(1-|z|^2)^2|S_f(z)|\leq(2\cos\alpha)\left(1+(1-\cos\alpha)\frac{1+\gamma}{1-\gamma}\right).
	\end{align*}
\end{corollary}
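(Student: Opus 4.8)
The plan is to mirror the proof of Theorem \ref{Th-4.4} verbatim up to its decisive inequality, and then replace the special estimate that relied on $f''(0)=0$ (equivalently $\phi(0)=0$) by the general bound furnished by Lemma \ref{lemA}. Concretely, I would first recall from the proof of Theorem \ref{Th-4.1} that membership $f\in\mathcal{S}_\alpha$ yields the representation $f''(z)/f'(z)=2e^{-i\alpha}\cos\alpha\,\phi(z)/(1-z\phi(z))$ for some $\phi\in\mathbb{B}$, and evaluate at $z=0$ (using $f'(0)=1$) to record that $|\phi(0)|=|f''(0)|/(2\cos\alpha)=\gamma$. Note that here $\phi(0)$ need not vanish, which is exactly the point of departure from Theorem \ref{Th-4.4}.

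Next I would compute the Schwarzian as in Theorem \ref{Th-4.4}, obtaining $Sf(z)=2e^{-i\alpha}\cos\alpha\,[\phi'(z)+(1-e^{-i\alpha}\cos\alpha)\phi^2(z)]/(1-z\phi(z))^2$, and then apply the triangle inequality together with the Schwarz--Pick estimate $|\phi'(z)|\le(1-|\phi(z)|^2)/(1-|z|^2)$. Introducing the auxiliary self-map $\Psi(z)=(\bar z-\phi(z))/(1-z\phi(z))$ and using the identity $1-|\Psi(z)|^2=(1-|\phi(z)|^2)(1-|z|^2)/|1-z\phi(z)|^2$, I would arrive at precisely the same pivotal inequality used in Theorem \ref{Th-4.4}, namely
\[
(1-|z|^2)^2|Sf(z)|\le(2\cos\alpha)(1-|\Psi(z)|^2)\left(1+(1-\cos\alpha)\frac{|\phi(z)|^2(1-|z|^2)}{1-|\phi(z)|^2}\right).
\]

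At this stage Theorem \ref{Th-4.4} used $\phi(0)=0$ to reduce the fractional term to $|z|^2\le1$. Here instead I would invoke Lemma \ref{lemA} with $\gamma=|\phi(0)|$ to obtain $|\phi(z)|^2(1-|z|^2)/(1-|\phi(z)|^2)\le(\gamma+|z|)^2/(1-\gamma^2)$, and then bound $|z|<1$ so that $(\gamma+|z|)^2/(1-\gamma^2)\le(1+\gamma)^2/(1-\gamma^2)=(1+\gamma)/(1-\gamma)$; combined with $1-|\Psi(z)|^2\le1$ this yields the asserted estimate. The main thing to get right is this last algebraic step: the clean factor $(1+\gamma)/(1-\gamma)$ emerges only through the cancellation $(1+\gamma)^2/(1-\gamma^2)=(1+\gamma)/(1-\gamma)$, so Lemma \ref{lemA} must be applied so that the denominator appears as $1-\gamma^2$ and the maximum over $|z|$ is taken at the boundary value $|z|=1$. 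Unlike Theorem \ref{Th-4.4}, sharpness is not asserted, so no extremal function need be exhibited; the only genuine obstacle is the faithful reproduction of the $\Psi$-substitution together with the correct invocation of Lemma \ref{lemA} in the non-normalized case $\phi(0)\ne0$.
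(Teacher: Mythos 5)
Your proposal is correct and follows essentially the same route as the paper: the paper derives this corollary as the specialization of Theorem \ref{Th-4.5}, whose proof likewise takes the pivotal inequality from the proof of Theorem \ref{Th-4.4}, applies Lemma \ref{lemA} with $\gamma=|\phi(0)|$ in place of the normalization $\phi(0)=0$, and then bounds $(\gamma+|z|)^2/(1-\gamma^2)\leq(1+\gamma)/(1-\gamma)$ together with $1-|\Psi(z)|^2\leq1$. Your reconstruction matches this step for step, including the key cancellation $(1+\gamma)^2/(1-\gamma^2)=(1+\gamma)/(1-\gamma)$.
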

\begin{proof}[\bf Proof of Theorem \ref{Th-4.5}]
	Let $\gamma=|\phi(0)|$. Applying the Lemma A, we  calculate
	\begin{align}
		\frac{|\phi(z)|^2}{1-|\phi(z)|^2}\leq\frac{(\gamma+|z|)^2}{(1-\gamma^2)(1-|z|^2)}.
	\end{align}
	If we substitute in (2.30), we get
	\begin{align}
		(1-|z|^2)^2|S_f(z)|\leq(2\cos\alpha)(1-|\Phi_1(z)|^2)\left(1+(1-\cos\alpha)\frac{(\gamma+|z|)^2}{(1-\gamma^2)}\right).
	\end{align}
	From the fact that $|z|<1$ and  $1-|\Phi_1(z)|^2\leq1$, we can easily calculate
	\begin{align}
		(1-|z|^2)^2|S_f(z)|\leq(2\cos\alpha)\left(1+(1-\cos\alpha)\frac{1+\gamma}{1-\gamma}\right).
	\end{align}
	This completes the proof.
\end{proof}

\vskip .20in

\noindent\textbf{Acknowledgments.} The authors would like to the thank the referee for their helpful suggestions and comments to improve the the paper.\vspace{1.5mm}

\noindent\textbf{Compliance of Ethical Standards.}\vspace{2mm}

\noindent\textbf{Conflict of interest.} The authors declare that there is no conflict of interest regarding the publication of this paper.\vspace{1.5mm}

\noindent\textbf{Data availability statement.} Data sharing not applicable to this article as no datasets were generated or analyzed during the current study.\vspace{1.5mm}

\noindent\textbf{Funding} No funds.

\vskip.20in

\end{document}